\DeclareMathOperator*{\argmin}{arg\,min}
\renewcommand{\div}{\text{div}}
\newcommand{\A}{{\mathcal A}}
\renewcommand{\P}{{\mathbb P}}
\renewcommand{\bar}[1]{{\overline{#1}}}
\newcommand{\R}{\mathbb{R}}
\newcommand{\Z}{\mathbb{Z}}
\newcommand{\vb}[1]{\mathbf{#1}}
\newcommand{\eps}{\varepsilon}
\renewcommand{\phi}{\varphi}
\def\XXint#1#2#3{{\setbox0=\hbox{$#1{#2#3}{\int}$ }
\vcenter{\hbox{$#2#3$ }}\kern-.6\wd0}}
\newtheorem{theorem}{Theorem}
\newtheorem{lemma}[theorem]{Lemma}
\theoremstyle{definition}
\newtheorem{remark}[theorem]{Remark}
\begin{document} 
\title[PDE acceleration for obstacle problems]{PDE Acceleration: A convergence rate analysis and applications to obstacle problems}
\thanks{Jeff Calder was supported of NSF-DMS grant 1713691.  Anthony Yezzi was supported by NSF-CCF grant 1526848 and ARO W911NF-18-1-0281.\\
{\bf Source code:} \url{https://github.com/jwcalder/MinimalSurfaces}}
\author{Jeff Calder}
\address{School of Mathematics, University of Minnesota}
\email{jcalder@umn.edu}

\author{Anthony Yezzi}
\address{School of Electrical and Computer Engineering, Georgia Institute of Technology}
\email{anthony.yezzi@ece.gatech.edu}

\begin{abstract}
This paper provides a rigorous convergence rate and complexity analysis for a recently introduced framework, called \emph{PDE acceleration}, for solving problems in the calculus of variations, and explores applications to obstacle problems. PDE acceleration grew out of a variational interpretation of momentum methods, such as Nesterov's accelerated gradient method and Polyak's heavy ball method, that views acceleration methods as equations of motion for a generalized Lagrangian action.  Its application to convex variational problems yields equations of motion in the form of a damped nonlinear wave equation rather than nonlinear diffusion arising from gradient descent. These \emph{accelerated PDE's} can be efficiently solved with simple explicit finite difference schemes where acceleration is realized by an improvement in the CFL condition from $dt\sim dx^2$ for diffusion equations to $dt\sim dx$ for wave equations. In this paper, we prove a linear convergence rate for PDE acceleration for strongly convex problems, provide a complexity analysis of the discrete scheme, and show how to optimally select the damping parameter for linear problems. We then apply PDE acceleration to solve minimal surface obstacle problems, including double obstacles with forcing, and stochastic homogenization problems with obstacles, obtaining state of the art computational results.
\end{abstract}

\maketitle
\section{Introduction}

Optimization is one of the most prominent computational problems in science and engineering. For large scale problems, which are common in machine learning, second order methods, such as Newton's method, are intractable, and first order optimization algorithms are the method of choice \cite{bottou2010large}. One of the oldest first order algorithms for optimization is gradient descent
\begin{equation}\label{eq:graddesc}
x_{k+1} = x_k - \alpha \nabla f(x_k).
\end{equation}
Gradient descent converges reliably for convex problems, and versions of gradient descent (such as stochastic gradient descent) are state of the art in modern large scale machine learning problems \cite{bottou2010large}.

While gradient descent is a reliable first order method, for many problems it is very slow to converge. This has led to the development of accelerated versions of gradient descent that incorporate some form of momentum. One example is Polyak's heavy ball method \cite{polyak1964some}
\begin{equation}\label{eq:polyak}
x_{k+1} = x_k - \alpha \nabla f(x_k) + \beta (x_k - x_{k-1}).
\end{equation}
The term $\beta (x_k - x_{k-1})$ is referred to as momentum, and acts to accelerate convergence. Polyak's heavy ball method is simply a discretization of the second order ODE
\begin{equation}\label{eq:polyakode}
\ddot{x} + a \dot{x} = -\nabla f(x),
\end{equation}
which corresponds to the equations of motion for a body in a potential field. This continuum version is also called \emph{heavy ball with friction} and was studied by Attouch, Goudou, and Redont~\cite{attouch2000heavy}, and also by Goudou and Munier \cite{goudou2009gradient}. Another example of a momentum descent algorithm is Nesterov's famous accelerated gradient descent~\cite{nesterov1983method}, one form of which is
\begin{equation}\label{eq:nes}
x_{k+1} = y_k - \alpha \nabla f(y_k),  \ \  y_{k+1} = x_{k+1} + \frac{k-1}{k+2}(x_{k+1}-x_{k}).
\end{equation}
Nesterov's accelerated gradient descent contains an initial step of gradient descent, and then a second momentum step that averages the new update with the previous iterate. Nesterov \cite{nesterov1983method} proved that the method converges (for strongly convex problems) at a rate of $O(1/t^2)$ after $t$ steps, which is optimal for first order methods. 

Many variants of Nesterov acceleration have been proposed over the years, and the methods are very popular in machine learning \cite{sutskever2013importance,wibisono2016variational}, due to both the acceleration in convex problems, and the ability to avoid local minima in nonconvex problems. Recent work has begun to shed light on the fundamental nature of acceleration in optimization. Su, Boyd and Candes \cite{su2014differential} showed that Nesterov acceleration is a discretization of the second order ODE
\begin{equation}\label{eq:contNest}
\ddot{x} + \frac{3}{t}\dot{x} = -\nabla f(x).
\end{equation}
This ODE has been termed \emph{continuous time Nesterov} \cite{ward2017toolkit}.
Since the friction coefficient $3/t$ vanishes as $t\to \infty$, many implementations of the algorithm involve \emph{restarting}, whereby time is reset to $t=0$ whenever the system appears underdamped \cite{ward2017toolkit}. Wibisono, Wilson, and Jordan \cite{wibisono2016variational} went further, showing that all Nesterov type accelerated descent methods can be realized as discretizations of equations of motion in a generalized Lagrangian sense. In doing so, they offer a highly insightful and useful variational characterization of accelerated gradient descent.

Following their Lagrangian formulation, Yezzi and Sundaramoorthi \cite{yezzi2017accelerated} developed an accelerated PDE framework for solving active contour models in image segmentation, which are notorious for local minima. In a parallel work, Sundaramoorthi and Yezzi \cite{sundaramoorthi2018accelerated} (see also \cite{sundaramoorthi2018accelerated-nips}) applied the same ideas to flows of diffeomorphisms, which have applications in computer vision, such as optical flow problems. This \emph{PDE acceleration} framework was further developed by Benyamin, Calder, Sundaramoorthi and Yezzi \cite{benyamin2018anaccelerated} in the context of calculus of variations problems defined for functions on $\R^n$, including stability analysis for various explicit and semi-implicit discretization schemes, where they illustrated several examples in image processing such as total variation (TV) and Beltrami regularization. They drew special attention to a general class of regularized optimization problems where the \emph{accelerated PDE} takes the form a damped nonlinear \emph{wave} equation (generalizing \eqref{eq:polyakode} and \eqref{eq:contNest}), and the acceleration is realized as an improvement in the CFL condition from $dt\sim dx^2$ for diffusion equations (or standard gradient descent), to $dt\sim dx$ for wave equations.  We also mention that there have been some recent approaches to acceleration in image processing, which involve solving PDEs arising from variational problems \cite{grewenig2016fsi,hafner2016fsi,weickert2016cyclic,bahr2017fast}. Since these methods are not derived from a variational (Lagrangian) perspective, the methods do not descend on an energy and lack convergence guarantees and convergence rates.  Finally, we mention an interesting recent work \cite{schaeffer2016accelerated} that considers acceleration-type schemes for non-linear elliptic equations that \emph{do not} arise from variational formulations. The authors of \cite{schaeffer2016accelerated} formulate their schemes to blend together acceleration and gradient descent in such a way that the iterates satisfy a comparison principle, which is then used to prove convergence to steady state. While the application is not variational, the authors observe acceleration similar to the variational setting. 

This paper has several contributions. First, we analyze \emph{PDE acceleration} and prove convergence with a linear rate for strongly convex problems. Using the convergence rate, we show that the computational complexity of PDE acceleration is $O(m^{n+1})$ for solving  a PDE in dimension $n$ on a grid with $m$ points along each coordinate axis (in other words, $O(N^{(n+1)/n})$ where $N=m^n$ is the number of grid points). This is the same complexity as the conjugate gradient method for linear problems \cite{leveque2007finite}. Second, we provide a linear analysis of PDE acceleration, and show how to optimally select the damping coefficient via the solution of an eigenvalue problem. As a toy example, we study the Dirichlet problem and show that PDE acceleration compares favorably to preconditioned conjugate gradient and MINRES methods \cite{leveque2007finite}. In contrast to other indirect methods, the PDE acceleration method is very simple to implement with explicit or semi-implicit Euler discretizations of the wave equation (discussed extensively in \cite{benyamin2018anaccelerated}), and extends directly to nonlinear problems.

Finally, we apply the PDE acceleration method to efficiently solve minimal surface obstacle problems \cite{wang2008algorithm,caffarelli1998obstacle,zosso2017efficient}. Solving obstacle problems requires resolving a \emph{free boundary}, which makes efficient solutions challenging to obtain. Many algorithms have been proposed for solving classes of obstacle problems; a short list includes penalty methods \cite{tran20151,brezis1968methodes,scholz1984numerical}, splitting and projection algorithms \cite{lions1979splitting,zhang2001multilevel}, free boundary formulations \cite{braess2007convergence,johnson1992adaptive,majava2004level}, Lagrange multipliers \cite{hintermuller2002primal,hintermuller2011obstacle},  domain decomposition \cite{badea2003convergence} and multigrid methods \cite{hoppe1987multigrid,tai2003rate}.  Of particular interest is a recent primal dual approach to obstacle problems \cite{zosso2017efficient}, which has some flavor of a momentum-based descent algorithm. The authors of \cite{zosso2017efficient} show that their primal dual approach for obstacle problems is significantly faster than existing approaches. As an independent contribution we make an improvement to the primal dual algorithm, allowing it to work for nonlinear minimal surface problems, and we compare the method to PDE acceleration. We find PDE acceleration is approximately 10x faster in terms of computation time in C code for most experiments, with the difference attributed to the non-explicit dual update in \cite{zosso2017efficient}.  We also compare against the $L^1$-penalty method of \cite{tran20151}, which we find is significantly slower than both primal dual and PDE acceleration for nonlinear obstacle problems. 

We mention that, at the discrete level, PDE acceleration resembles other momentum based algorithms, such as the heavy ball method or Nesterov acceleration \cite{polyak1964some,nesterov1983method}. The results in this paper show that there are significant advantages to formulating a general continuum PDE acceleration framework. First, by performing the convergence rate analysis at the PDE level, we get a mesh-free convergence rate and the number of iterations to converge depends solely on the CFL time step restriction. Second, the parameters in the model---the friction coefficient and time step---can now be chosen optimally from PDE considerations, and do not require manual fine-tuning. In particular, the optimal choice for the damping/friction coefficient can be derived from an eigenvalue problem (see Section \ref{sec:optdamp}), while the largest stable time step is determined from the CFL condition \cite{leveque2007finite}.

\subsection{Outline}
This paper is organized as follows. In Section \ref{sec:frame} we review (for the case of functions defined over
$\R^n$) and slightly generalize the PDE acceleration framework, prove a linear convergence rate, and analyze the complexity of PDE acceleration. In particular, in Section \ref{sec:optdamp} we show how to select the damping coefficient optimally for linear problems. In Section \ref{sec:DP} we study the Dirichlet problem as a toy example, and explore connections to primal dual algorithms. In Section \ref{sec:obstacleproblems} we show how to apply PDE acceleration to nonlinear obstacle problems, and describe our improved version of the primal dual method from \cite{zosso2017efficient}. Finally, in Section \ref{sec:app} we give results of numerical simulations comparing PDE acceleration to primal dual and $L^1$-penalty methods for several different obstacle problems, including double obstacle problems with forcing, and stochastic homogenization problems with obstacles.

\section{PDE acceleration framework}
\label{sec:frame}

We review here the PDE acceleration framework for solving unconstrained problems in the calculus of variations for functions over $\R^n$, as originally presented in \cite{benyamin2018anaccelerated}. Consider the general unconstrained calculus of variations problem
\begin{equation}\label{eq:CV}
\min_{u\in \A}E[u]:=\int_\Omega L(x,u,\nabla u)\, dx,
\end{equation}
where $\Omega \subset \R^n$ and $\A =g +H^1_0(\Omega)$ or $\A = H^1(\Omega)$. We write $L=L(x,z,p)$ and write $\nabla_x L$, $L_z$, and $\nabla_pL$ for the partial derivatives of $L$ in each variable. There is no loss of generality in considering the unconstrained problem since we will handle constraints (such as obstacles) later with an $L^2$-penalty term (see Section \ref{sec:pdeacc}). We define the generalized action integral
\begin{equation}\label{eq:ai}
J[u] = \int_{t_0}^{t_1}k(t)\left( K[u] - b(t)E[u]\right) \, dt,
\end{equation}
where $u=u(x,t)$,  $k(t),b(t)$ are time-dependent weights, and $K[u]$ is the analog of kinetic energy, which we take to be 
\begin{equation}\label{eq:kin}
K[u] = \frac{1}{2}\int_\Omega \rho(x,u,\nabla u) u_t^2\, dx,
\end{equation}
where $\rho:\Omega\times \R\times \R^n\to \R_+$ is a mass density that may depend on $u$ and $\nabla u$. The action $J$ is a Lagrangian action with kinetic energy $K$ and potential energy $E$. We note that the time-dependent weight $k(t)$ is necessary to ensure dissipation of energy (in particular dissipation of the objective $E$; see Lemma \ref{lem:monotone}). Regarding the mass density, often one may take $\rho=\rho(x)$ or $\rho=1$. The more general setting may be useful, for example, in level set problems like image segmentation, where the object of interest is really the zero level set of $u$, and the kinetic energy of the zero level set can be obtained by selecting $\rho(x,z,p) = \delta(z)/|p|$.

The descent equations for the PDE acceleration method are exactly the Euler-Lagrange equations for $J$, i.e., the equations of motion, which we derive now. Let us write
\begin{equation}\label{eq:gE}
\nabla E[u] := L_z(x,u,\nabla u) -\div\left( \nabla_p L(x,u,\nabla u)\right)
\end{equation}
and
\begin{equation}\label{eq:gK}
\nabla K[u] := \frac{1}{2}u_t^2\rho_z(x,u,\nabla u) -\frac{1}{2}\div\left(u_t^2 \nabla_p \rho(x,u,\nabla u)\right)
\end{equation}
for the Euler-Lagrange equation for $E$ and for $K$. We recall that $\nabla E$ can be interpreted as the gradient of $E$ in the sense that
\begin{equation}\label{eq:var}
\frac{d}{d\eps}\Big\vert_{\eps=0}E[u + \eps v] = \int_\Omega \nabla E[u] \,v \, dx
\end{equation}
for all $v$ smooth with compact support in $\Omega$. Using this identity, a  variation on $J$ yields
\begin{align*}
\frac{d}{d\eps}\Big\vert_{\eps=0}J[ u+\eps v ] &= \int_{t_0}^{t_1}\int_{\Omega} k(t)\rho u_tv_t + k(t)\nabla K[u]\,v -k(t)b(t)\nabla E[u]\,v\, dx,  \\
&=\int_{t_0}^{t_1}\int_{\Omega} \left(-\frac{d}{dt}\left(k(t)\rho u_t\right) + k(t)\nabla K[u]- k(t)b(t) \nabla E[u]\right)v\, dx, 
\end{align*}
for $v \in C^\infty_c(\Omega\times (t_0,t_1))$. Therefore, the equations of motion are
\begin{equation}\label{eq:motion}
\frac{d}{dt}\left( \rho u_t \right) + a(t)\rho u_t =\nabla K[u] -b(t) \nabla E[u],
\end{equation}
where $a(t) = k'(t)/k(t)$. The nonlinear wave equation \eqref{eq:motion} is the descent equation for PDE acceleration, and the minimizer of $E$ is obtained by solving the equation for some initial conditions $u(x,0)$ and $u_t(x,0)$, and sending $t\to \infty$. The boundary condition depends on the choice of $\A$---we either have $u=g$ or $\nabla_pL\cdot \vb{n}=0$ on $\partial\Omega$, where $\vb{n}$ is the unit outward normal to $\partial \Omega$. Of course, we can also consider problems with mixed boundary conditions.

Notice in the equations of motion \eqref{eq:motion} the gradient $-\nabla E$ is now a forcing term in a damped wave equation, so it contributes to a change in velocity at each time step. The reader should contrast this with gradient descent
\[u_t = -\nabla E[u],\]
where the gradient is \emph{exactly} the velocity term, which can change \emph{instantaneously}.

In special cases, we can recover continuum versions of Polyak's heavy ball method, and Nesterov acceleration. For example, if we take $\rho=1$ and $a(t)$ and $b(t)$ to be constants, we get the PDE continuum version of the heavy ball with friction \eqref{eq:polyakode}, and for $a(t)=3/t$, $\rho=1$ and $b(t)=1$ we get the continuum version of Nesterov acceleration. 

\subsection{Convergence rate}
\label{sec:rate}

We prove in this section a convergence rate for the solution $u$ of the equations of motion \eqref{eq:motion} to the steady state solution $u^*$ of 
\begin{equation}\label{eq:steady}
\nabla E[u^*] = 0 \ \ \text{ in }\Omega,
\end{equation}
subject to the Dirichlet condition $u^*=g$ or the Neumann-type condition $\nabla_p L\cdot \vb{n}=0$ on $\partial\Omega$.  We assume throughout this section that $\Omega$ is open and bounded with Lipschitz boundary $\partial\Omega$.  

We first establish monotonicity of total energy.
\begin{lemma}[Energy monotonicity]\label{lem:monotone}
Assume $a(t),b(t)\geq 0$ and let $u$ satisfy \eqref{eq:motion}. Suppose either $u(x,t)=g(x)$ or $\nabla_pL(x,u,\nabla u)\cdot \vb{n}=0=\nabla_p\rho(x,u,\nabla u)\cdot \vb{n}$ on $\partial\Omega$. Then 
\begin{equation}\label{eq:monotone}
\frac{d}{dt}\left( K[u] + b(t)E[u] \right) = -2a(t)K[u] + b'(t)E[u].
\end{equation}
\end{lemma}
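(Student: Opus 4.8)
The plan is to differentiate the left side of \eqref{eq:monotone} directly and reduce everything to the equation of motion \eqref{eq:motion}. Since $\frac{d}{dt}\big(b(t)E[u]\big) = b'(t)E[u] + b(t)\frac{d}{dt}E[u]$, it is enough to establish the ``clean'' dissipation identity $\frac{d}{dt}K[u] + b(t)\frac{d}{dt}E[u] = -2a(t)K[u]$; the stated formula then follows because $\frac{d}{dt}\big(K[u]+b(t)E[u]\big) = \big(\frac{d}{dt}K[u]+b(t)\frac{d}{dt}E[u]\big) + b'(t)E[u]$.

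First I would record two integration-by-parts identities obtained by testing against $v=u_t$. Integrating by parts the $\div$-term in \eqref{eq:gE}, the variational identity \eqref{eq:var} reads, for smooth $v$, $\int_\Omega \nabla E[u]\,v\,dx = \frac{d}{d\eps}\big\vert_{\eps=0}E[u+\eps v] - \int_{\partial\Omega}(\nabla_pL\cdot\vb{n})\,v\,dS$; taking $v=u_t$, the boundary term vanishes because either $u_t\equiv 0$ on $\partial\Omega$ (in the Dirichlet case $u=g$) or $\nabla_pL\cdot\vb{n}=0$ there, so $\frac{d}{dt}E[u] = \int_\Omega \nabla E[u]\,u_t\,dx$. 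The same manipulation applied to \eqref{eq:gK}, integrating by parts the $\div(u_t^2\nabla_p\rho)$-term and using $\nabla_p\rho\cdot\vb{n}=0$ (or again $u_t\equiv 0$) on $\partial\Omega$, gives $\int_\Omega \nabla K[u]\,u_t\,dx = \tfrac12\int_\Omega u_t^2\big(\rho_z u_t + \nabla_p\rho\cdot\nabla u_t\big)\,dx$. The right side here is precisely $\tfrac12\int_\Omega (\partial_t\rho)\,u_t^2\,dx$, where $\partial_t\rho$ denotes the time derivative of $\rho(x,u,\nabla u)$ along the flow, since $\partial_t\rho = \rho_z u_t + \nabla_p\rho\cdot\nabla u_t$ by the chain rule.

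The heart of the argument is to test \eqref{eq:motion} against $u_t$. Using the pointwise identity $\frac{d}{dt}(\rho u_t)\,u_t = \frac{d}{dt}\big(\tfrac12\rho u_t^2\big) + \tfrac12(\partial_t\rho)u_t^2$, multiplying \eqref{eq:motion} by $u_t$, integrating over $\Omega$, and using $a(t)\int_\Omega\rho u_t^2\,dx = 2a(t)K[u]$, we obtain
\begin{equation*}
\frac{d}{dt}K[u] + \tfrac12\int_\Omega (\partial_t\rho)u_t^2\,dx + 2a(t)K[u] = \int_\Omega \nabla K[u]\,u_t\,dx - b(t)\int_\Omega \nabla E[u]\,u_t\,dx.
\end{equation*}
By the two identities above, $\tfrac12\int_\Omega(\partial_t\rho)u_t^2\,dx = \int_\Omega\nabla K[u]\,u_t\,dx$, so the $\nabla K$ terms cancel, and $\int_\Omega\nabla E[u]\,u_t\,dx = \frac{d}{dt}E[u]$. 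What remains is exactly $\frac{d}{dt}K[u] + 2a(t)K[u] + b(t)\frac{d}{dt}E[u] = 0$, the clean dissipation identity.

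I expect the only real difficulty to be analytic bookkeeping rather than anything conceptual: justifying differentiation under the integral sign (which requires enough regularity of $u$ in $t$, assumed implicitly here as in \cite{benyamin2018anaccelerated}), and keeping track of the two boundary contributions---one from $\nabla_pL$ in $\nabla E$ and one from $\nabla_p\rho$ in $\nabla K$---which is exactly why the hypotheses impose both $\nabla_pL\cdot\vb{n}=0$ and $\nabla_p\rho\cdot\vb{n}=0$ in the Neumann case (and why no such conditions are needed in the Dirichlet case, where $u_t$ itself vanishes on $\partial\Omega$). Beyond that, everything reduces to the cancellation of the $\int_\Omega\nabla K[u]\,u_t\,dx$ terms identified above.
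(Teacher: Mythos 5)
Your proof is correct and follows essentially the same route as the paper's: both arguments test the equation of motion \eqref{eq:motion} against $u_t$, use the boundary hypotheses to kill the $\nabla_pL\cdot\vb{n}$ and $\nabla_p\rho\cdot\vb{n}$ surface terms, and rely on the same cancellation $\int_\Omega\nabla K[u]\,u_t\,dx=\tfrac12\int_\Omega(\partial_t\rho)\,u_t^2\,dx$ before adding $b'(t)E[u]$ at the end. The paper merely organizes the algebra differently (writing $\frac{d}{dt}K[u]$ two ways and equating), which is equivalent to your pointwise identity $\frac{d}{dt}(\rho u_t)\,u_t=\frac{d}{dt}\bigl(\tfrac12\rho u_t^2\bigr)+\tfrac12(\partial_t\rho)u_t^2$.
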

\begin{proof}
First note that
\[\frac{d}{dt}E[u] = \frac{d}{d\eps}\Big\vert_{\eps=0}E[u + \eps u_t] = \int_\Omega \nabla E[u]u_t\, dx + \int_{\partial\Omega} u_t \nabla _pL(x,u,\nabla u)\cdot \vb{n}\, dS.\]
Due to the boundary condition, either $u_t=0$ or $\nabla _pL(x,u,\nabla u)\cdot \vb{n}=0$ on $\partial\Omega$. Therefore
\begin{equation}\label{eq:dKT}
\frac{d}{dt}E[u]=\int_\Omega \nabla E[u]u_t\, dx.
\end{equation}
Similarly, we have
\begin{equation}\label{eq:dET}
\frac{d}{dt}K[u]=\int_\Omega \rho u_t u_{tt} +  \nabla K[u]u_t\, dx.
\end{equation}
Using the equations of motion \eqref{eq:motion} we also have
\begin{align*}
\frac{d}{dt}K[u]&=\frac{1}{2}\int_\Omega \rho u_t u_{tt}  + u_t \frac{d}{dt}(\rho u_t) \, dx\\
&=\frac{1}{2}\int_\Omega \rho u_t u_{tt} + \nabla K[u]u_t\, dx - \frac{1}{2}\int_\Omega b(t)\nabla E[u]u_t + a(t)\rho u_t^2\, dx\\
&=\frac{1}{2} \frac{d}{dt}K[u] - \frac{1}{2}b(t)\frac{d}{dt}E[u] - a(t)K[u],
\end{align*}
where we used \eqref{eq:dKT} and \eqref{eq:dET} in the last line. It follows that
\[\frac{d}{dt}K[u] + b(t)\frac{d}{dt}E[u] = -2a(t)K[u].\]
The proof is completed by adding $b'(t)E[u]$ to both sides.
\end{proof}
\begin{remark}
If $b'(t)\leq 0$ and $E[u]\geq 0$, then the total energy $K[u] + b(t)E[u]$ is monotonically decreasing at a rate controlled by the damping coefficient $a(t)$. In particular, we have
\begin{equation}\label{eq:mon2}
\frac{d}{dt}\left( K[u] + b(t)E[u] \right)\leq -2a(t)K[u].
\end{equation}
\end{remark}

We now prove a linear convergence rate in the special case that $\rho,a$ and $b$ are constants, $E$ has the form
\begin{equation}\label{eq:Eform}
E[u] = \int_\Omega \Phi(x,\nabla u) + \Psi(x,u)\, dx,
\end{equation}
and $u$ and $u^*$ satisfy the Dirichlet condition $u=g=u^*$ on $\partial\Omega$. We assume that $\Phi,\Psi\in C^2$, $\Phi=\Phi(x,p)$ is convex in $p$, $\Psi=\Psi(x,z)$ is convex in $z$, and for all $p\in \R^n$, $x\in \Omega$, and $z\in \R$
\begin{equation}\label{eq:ctheta}
\theta I \leq \nabla^2_p \Phi(x,p) \leq \theta^{-1}I
\end{equation}
and
\begin{equation}\label{eq:cmu}
\Psi_{zz}(x,z) \leq \mu
\end{equation}
for some $\theta,\mu >0$. 
\begin{remark}

If there exists $M>0$ such that $|\nabla u^*|\leq M$, where $u^*$ solves \eqref{eq:steady}, then we can relax \eqref{eq:ctheta} to the condition that $\nabla^2_p\Phi>0$ (that is, $\Phi$ is strictly convex in $p$). Indeed, define
\begin{equation}\label{eq:phinew}
\bar{\Phi}(x,p) := \Phi(x,p)\varphi(p) + K\max\{|p|-M,0\}^2,
\end{equation}
where $K>0$ and $\varphi\in C^\infty$ is a bump function with $0\leq \varphi\leq 1$, $\varphi(p)=1$ for $|p|\leq 2M$, and $\varphi(p)=0$ for $|p|\geq 4M$. Clearly $ \bar{\Phi}(x,p)$ satisfies \eqref{eq:ctheta} for some $\theta>0$ when $|p|\geq 4M$, and by choosing $K$ large enough, the strict convexity of $\Phi$ and compactness of $\bar{\Omega} \times B(0,2M)$ allow us to extend the condition \eqref{eq:ctheta} to all $(x,p) \in \Omega\times \R^n$. Now let $\bar{E}$ be the energy in \eqref{eq:Eform} with $\bar{\Phi}$ in place of $\Phi$. Since $|\nabla u^*|\leq M$ and $\bar{\Phi}(x,p) = \Phi(x,p)$ for all $|p|\leq M$, we see that $\nabla \bar{E}(u^*)=0$; that is, $u^*$ is the unique solution of $\nabla \bar{E}=0$ as well. We can use $\bar{E}$ in place of $E$ in PDE acceleration to ensure \eqref{eq:ctheta} holds while obtaining the same steady state solution (though the dynamics can be different). This is mainly a theoretical concern, and not something we do in practical applications.

We note that for minimal surface obstacle problems, the gradients of solutions are H\"older continuous (e.g., $u^*\in C^{1,\alpha}$) \cite{caffarelli1998obstacle}, and hence we can always find such an $M$.
\end{remark}
From now on we write $\nabla^2$ in place of $\nabla ^2_p$.

We now prove the following linear convergence rate.
\begin{theorem}[Convergence rate]\label{thm:rate}
Let $u$ satisfy \eqref{eq:motion}. Assume \eqref{eq:Eform}, \eqref{eq:ctheta}, and \eqref{eq:cmu} hold, $u=u^*$ on $\partial\Omega$, $a(t)=a>0$ is constant and $b(t)\equiv 1$ and $\rho\equiv 1$. Then there exists $C>0$ depending on $a,\theta$, $u(x,0)$, and $u_t(x,0)$ such that
\begin{equation}\label{eq:rate}
%E[u]-E[u^*] + \int_\Omega(u-u^*)^2 + |\nabla u - \nabla u^*|^2\,dx \leq C\exp\left( -\beta t \right),
\|u - u^*\|_{H^1(\Omega)}^2 \leq C\exp\left( -\beta t \right),
\end{equation}
where
\begin{equation}\label{eq:beta}
\beta = \frac{a\sqrt{c^2+4\lambda\theta}-ac}{2\sqrt{\lambda\theta}+a}\ \ \text{with } c = a + \frac{\mu}{a}+\frac{2\lambda}{a}(\theta^{-1}-\theta),
\end{equation}
and  $\lambda>0$ is the Poincar\'e constant for $\Omega$.
\end{theorem}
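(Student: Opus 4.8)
The plan is to introduce a Lyapunov functional that combines the energy deficit with the kinetic energy and a cross term, show it decays exponentially, and then use strong convexity to transfer exponential decay of the energy deficit to exponential decay of $\|u-u^*\|_{H^1}^2$. Concretely, set $w = u - u^*$ (so $w = 0$ on $\partial\Omega$ and $w_t = u_t$), and work with the shifted energy $\widetilde E[u] = E[u] - E[u^*] - \int_\Omega \nabla E[u^*] w\, dx = E[u]-E[u^*]$, which is nonnegative by convexity and, by the upper Hessian bounds \eqref{eq:ctheta} and \eqref{eq:cmu}, comparable to $\|w\|_{H^1}^2$ from above; the lower Hessian bound $\theta I \le \nabla^2_p\Phi$ together with the Poincar\'e inequality (constant $\lambda$) gives $\widetilde E[u] \ge \frac{\theta}{2}\|\nabla w\|_{L^2}^2 \ge \frac{\theta\lambda}{2(1+\lambda)}\|w\|_{H^1}^2$ after also handling the $\Psi$ term (where one only needs convexity in $z$, so that term contributes a nonnegative amount). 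This establishes that it suffices to prove $K[u] + \widetilde E[u]$ decays like $e^{-\beta t}$.

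Next I would define the Lyapunov functional
\[
\Lambda(t) = K[u] + \widetilde E[u] + \gamma \int_\Omega w\, w_t\, dx
\]
for a small parameter $\gamma>0$ to be chosen, where $K[u] = \frac12\int_\Omega u_t^2\,dx$ since $\rho\equiv 1$. The point of the cross term is that pure energy monotonicity (Lemma \ref{lem:monotone}, which here gives $\frac{d}{dt}(K+\widetilde E) = -2aK$) only controls the kinetic energy, not the potential part, so without the cross term one cannot close the estimate; the cross term is what injects dissipation into the $\widetilde E$ direction, at the cost of introducing indefinite terms that must be dominated. Differentiating, using \eqref{eq:motion} in the form $w_{tt} + a w_t = -\nabla E[u]$, and using $\int_\Omega \nabla E[u]\, w\, dx \ge \widetilde E[u]$ (a consequence of convexity, actually $\int_\Omega \nabla E[u] w \ge \widetilde E[u]$ via the fundamental theorem of calculus and $\nabla E[u^*]=0$) together with the quantitative strong-convexity lower bound, one gets
\[
\frac{d}{dt}\Lambda \le -2aK - \gamma\,(\text{coercive multiple of }\widetilde E) + \gamma K + (\text{error terms from }\theta^{-1}-\theta \text{ and }\mu).
\]
The error terms arise because $\int_\Omega \nabla E[u] w$ is only bounded below by $\widetilde E$ up to the spread between the Hessian bounds; bookkeeping these is where the somewhat baroque constant $c = a + \mu/a + \frac{2\lambda}{a}(\theta^{-1}-\theta)$ comes from.

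Then I would choose $\gamma$ to make the right-hand side bounded above by $-\beta\,\Lambda$; this amounts to solving a small $2\times 2$ quadratic-form positivity condition relating the coefficients of $K$, $\widetilde E$, and the cross term, and optimizing over $\gamma$ gives precisely the expression \eqref{eq:beta} for $\beta$ (the $\sqrt{c^2+4\lambda\theta}$ is the signature of such a quadratic optimization). Finally I would check that $\Lambda(t)$ is equivalent to $K[u]+\widetilde E[u]$ — i.e., $\frac12(K+\widetilde E) \le \Lambda \le \frac32(K+\widetilde E)$ or similar — for $\gamma$ small enough, using Cauchy--Schwarz and Poincar\'e on the cross term $\gamma\int w w_t$; Gr\"onwall then yields $\Lambda(t) \le \Lambda(0) e^{-\beta t}$, hence $\widetilde E[u(t)] \le C e^{-\beta t}$, and the lower coercivity bound on $\widetilde E$ finishes \eqref{eq:rate} with $C$ depending on $a,\theta$ and the initial data through $\Lambda(0)$. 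The main obstacle is the middle step: getting a clean differential inequality for $\Lambda$ while correctly tracking how the gap $\theta^{-1}-\theta$ and the bound $\mu$ enter, since the nonlinearity of $\Phi$ means $\int\nabla E[u]w$ must be compared to $\widetilde E[u]$ carefully (via a Taylor expansion with integral remainder of $\Phi(x,\nabla u)$ about $\nabla u^*$), and then balancing the resulting indefinite terms against $-2aK$ and $-\gamma\widetilde E$ to land on the stated $\beta$.
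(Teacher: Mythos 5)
Your plan is essentially the paper's proof: the paper also builds a Lyapunov functional combining $K[u]$, the energy deficit $E[u]-E[u^*]$, and the cross term $\int_\Omega w\,w_t\,dx$ (namely $e(t)=\int_\Omega \tfrac12 a^2w^2+aww_t\,dx+2(K[u]+E[u]-E[u^*])$), uses the same two-sided Hessian bounds --- the strong-convexity lower bound $\int_\Omega(\nabla E[u]-\nabla E[u^*])w\,dx\ge\theta\int_\Omega|\nabla w|^2dx$ and the Taylor-with-integral-remainder upper bound $E[u]-E[u^*]\le\tfrac12\int_\Omega\theta^{-1}|\nabla w|^2+\mu w^2dx$ --- together with Lemma \ref{lem:monotone} and Poincar\'e, and closes with Gr\"onwall. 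The only cosmetic difference is that the paper fixes the cross-term coefficient at $a$ (adding $\tfrac12a^2\int w^2$ to make $e\ge0$ by completing the square) and optimizes a Young's-inequality parameter $\eps$, whereas you leave the coefficient $\gamma$ free and optimize over it; both optimizations reduce to the same quadratic yielding \eqref{eq:beta}.
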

\begin{proof}
We use energy methods. Since $u$ solves \eqref{eq:motion} we have
\[u_{tt} + au_t + \nabla E[u] - \nabla E[u^*] = 0.\]
Multiply both sides by $w:=u-u^*$ and integrate over $\Omega$ to find
\[\int_\Omega w_{tt}w + aw_tw + (\nabla E[u] - \nabla E[u^*])(u-u^*)\,dx = 0.\]
Integrating by parts we have
\begin{align*}
&\int_\Omega (\nabla E[u] - \nabla E[u^*])(u-u^*)\,dx\\
& =\int_\Omega  (\nabla \Phi(\nabla u) - \nabla \Phi(\nabla u^*))\cdot (\nabla u-\nabla u^*) + (\Psi_z(x,u) - \Psi_z(x,u^*))(u-u^*)\, dx.
\end{align*}
Since $\Psi$ is convex in $z$ and $\Phi$ is strongly convex (by \eqref{eq:ctheta}) we deduce
\[\int_\Omega (\nabla E[u] - \nabla E[u^*])(u-u^*)\,dx \geq \theta\int_\Omega |\nabla w|^2\, dx.\]
It follows that
\[\int_\Omega w_{tt}w + aw_tw + \theta|\nabla w|^2\,dx \leq 0,\]
and so
\begin{equation}\label{eq:ed}
\frac{d}{dt}\int_\Omega \frac{1}{2}a^2w^2 + aww_t\, dx \leq \int_\Omega aw_t^2 - a\theta |\nabla w|^2\, dx = 2aK[w] - a\theta \int_\Omega |\nabla w|^2\,dx.
\end{equation}
This leads us to define the energy
\begin{equation}\label{eq:ene}
e(t):= \int_\Omega \frac{1}{2}a^2w^2 + aww_t \, dx + 2(K[u] + E[u]-E[u^*]).
\end{equation}
Note first that 
\[e(t) = \int_\Omega \frac{1}{2}(aw+w_t)^2 + \frac{1}{2}w_t^2 \, dx+ 2(E[u]-E[u^*]) \geq 0,\]
so $e$ is a valid energy. Using Lemma \ref{lem:monotone} and \eqref{eq:ed} we have
\begin{equation}\label{eq:edot}
\dot{e}(t) \leq 2aK[w] - 4aK[w] - a\theta \int_\Omega |\nabla w|^2\, dx = -a\int_\Omega \theta|\nabla w|^2\, + w_t^2 dx.
\end{equation}

Now, we compute
\begin{align*}
E[u] - E[u^*]&=\int_0^1 \frac{d}{dt}E[u^* + t(u-u^*)]\, dt\\
&=\int_0^1 \int_0^s\frac{d^2}{dt^2}E[u^* + t(u-u^*)]\, dt\, ds\\
&=\int_0^1 \int_0^s\int_\Omega\sum_{i,j=1}^n \Phi_{p_ip_j}(\nabla u^*+t(\nabla u-\nabla u^*))(u_{x_i}-u^*_{x_i})(u_{x_j}-u^*_{x_j}) \\
&\hspace{1.5in} + \Psi_{zz}(x,u^*+t(u-u^*))(u-u^*)^2\, dx\, dt\, ds\\
&\leq \int_0^1\int_0^s\int_\Omega \theta^{-1}|\nabla w|^2 + \mu w^2\,dx\, dt\,ds\\
&= \frac{1}{2}\int_\Omega \theta^{-1}|\nabla w|^2 + \mu w^2\,dx,
\end{align*}
where the last inequality used \eqref{eq:ctheta}. Combining this with the Poincar\'e inequality $\lambda\int_\Omega w^2 \,dx \leq  \int_\Omega|\nabla w|^2 \, dx$, we have
\begin{align*}
e(t) &\leq \frac{1}{2}\int_\Omega (a^2+a\eps + \mu)w^2 + a\eps^{-1} w_t^2 \, dx + 2(K[u] + E[u]-E[u^*])\\
&\leq  \int_\Omega \left[ \tfrac{1}{2}(a^2+a\eps+\mu)\lambda^{-1} + \theta^{-1} \right]|\nabla w|^2 + \frac{1}{2}\left(2 + a\eps^{-1}  \right)w_t^2 \, dx.
%&\leq C^{-1}\left( \int_\Omega \theta |\nabla w|^2 \,dx + 2K[w] \right),
\end{align*}
Now, there is a unique value of $\eps>0$ such that
\begin{equation}\label{eq:epsval}
\tfrac{1}{2}(a^2+a\eps + \mu)\lambda^{-1} + \theta^{-1} = \frac{\theta}{2}(2 + a\eps^{-1}).
\end{equation}
Selecting this value for $\eps$ we have by \eqref{eq:edot} that
\[e(t) \leq \frac{1}{2}(2 + a\eps^{-1})\int_\Omega \theta|\nabla w|^2 + w_t^2\, dx\leq  -\left( \frac{1}{a}+\frac{1}{2\eps} \right)\dot{e}(t).\]
Therefore 
\[\dot{e}(t) \leq \frac{-2a\eps}{2\eps+a}e(t),\]
from which it follows that
\begin{equation}\label{eq:ratee}
e(t) \leq e(0)\exp\left( \frac{-2a\eps}{2\eps+a}t \right).
\end{equation}

All that remains is to compute $\eps$. Note that \eqref{eq:epsval} is equivalent to
\[\eps^2 + \left( a + \frac{\mu}{a}+\frac{2\lambda}{a}(\theta^{-1}-\theta) \right)\eps - \theta\lambda = 0,\]
and hence
\[2\eps = \sqrt{c^2 + 4\lambda\theta}-c \leq 2\sqrt{\lambda\theta},\]
where 
\[c = a + \frac{\mu}{a}+\frac{2\lambda}{a}(\theta^{-1}-\theta).\]
%Recalling \eqref{eq:edot} we see that $\dot{e}(t)\leq -Cae(t)$, from which it follows that 
%\[e(t) \leq e(0)\exp\left( -Cat \right).\]
%From this we directly obtain $\int_\Omega w^2\, dx \leq 12e(0)a^{-2}e^{-Cat}$, 
%\[E(u)-E(u^*) \leq \frac{1}{2}e(0) \exp\left( -Cat \right),\]
%and $\int_\Omega |\nabla w|^2 \, dx\leq \theta^{-1}e(0)e^{-Cat}$, which completes the proof.
\end{proof}
\begin{remark}
The energy estimates obtained in the proof of Theorem \ref{thm:rate} establish uniqueness of solutions to \eqref{eq:motion} with values in $H^1(\Omega)$ and Dirichlet boundary conditions. We expect that existence of solutions follows from combining these energy estimates with a standard Galerkin approximation \cite{evans2002book}. Such a result is outside the scope of this paper, and we leave it to future work.
\end{remark}
\begin{remark}
Instead of formulating PDE acceleration at the continuum level and then discretizing the descent equations to compute the solution, it is possible to formulate PDE acceleration entirely in the discrete-space setting, by starting with a discretization of $E[u]$. Then the resulting descent equations become a discretization of \eqref{eq:motion} that is discrete in space and continuous in time, provided the discrete divergence is defined as the exact numerical adjoint of the discrete gradient (e.g., forward differences for the gradient and backward differences for the divergence, as we use in Section \ref{sec:app}). We expect the proof of Theorem \ref{thm:rate} to extend, with minor modifications, in this situation. 
\end{remark}

\subsection{Computational complexity}
\label{sec:complexity}

Theorem \ref{thm:rate} allows us to analyze the computational complexity of PDE acceleration. Suppose our domain is the unit box $[0,1]^2$ and we discretize the problem on an $m\times m$ grid with uniform spacing $dx=1/m$. Since we can discretize the wave equation \eqref{eq:motion}  with a time step $dt = O(dx) = O(1/m)$ while satisfying the CFL condition, the number of iterations required to converge to within a tolerance of $\eps>0$ in the $H^1$ norm satisfies
\begin{equation}\label{eq:kup}
k \leq c\beta^{-1}m\log(C\eps^{-1}).
\end{equation}
Using an explicit time stepping scheme, each iteration has complexity $O(m^2)$, hence the computational complexity for $2D$ problems is 
\begin{equation}\label{eq:2Dcomp}
\text{2D complexity }= O(\beta^{-1}m^3 \log(\eps^{-1})).
\end{equation}
A similar computation for $3D$ problems on a $m\times m\times m$ grid yields a complexity of 
\begin{equation}\label{eq:3Dcomp}
\text{3D complexity }= O(\beta^{-1}m^4 \log(\eps^{-1})).
\end{equation}
If we write the complexity in terms of the number of unknowns, which is $N=m^2$ in $2D$ and $N=m^3$ in $3D$, then the complexity is $O(N^{3/2})$ for 2D problems and $O(N^{4/3})$ for 3D problems. For linear problems, this complexity is similar to conjugate gradient methods \cite{leveque2007finite}.

\subsection{Optimal damping for linear problems}
\label{sec:optdamp}

While Theorem \ref{thm:rate} provides a convergence rate for the PDE acceleration method, it does not indicate how to optimally select the damping parameter $a$ to achieve the optimal rate. Here, we consider the selection of $a$ for linear problems. For nonlinear problems, we propose to linearize and apply the analysis described below.

 Let $L$ be a uniformly elliptic second order partial differential operator in divergence form, that is
\begin{equation}\label{eq:L}
Lu = -\sum_{i,j=1}^n (a^{ij}u_{x_i})_{x_j},
\end{equation}
where $A(x):=(a^{ij}(x))_{ij}\in C^\infty(\Omega)$, and there exists $\theta>0$ such that $A(x)\geq \theta I$ for all $x$. We consider the PDE acceleration method for solving the Dirichlet problem
\begin{equation}\label{eq:Lp}
\left\{\begin{aligned}
Lu^* + cu^* &= f&&\text{in }\Omega\\ 
u^* &=g&&\text{on }\partial\Omega,
\end{aligned}\right.
\end{equation}
where $c\geq 0$.
The equations of motion with constant $a(t)=a$ and $b(t)=b$ are
\begin{equation}\label{eq:Lpmotion}
\left\{\begin{aligned}
u_{tt}+au_t + bLu + bcu &= f&&\text{in }\Omega\times (0,\infty)\\ 
u &=g&&\text{on }\partial\Omega\times (0,\infty)\\
u &=u_0&&\text{on }\Omega\times \{t=0\}.
\end{aligned}\right.
\end{equation}
Let $w(x,t) = u(x,t) - u^*(x)$. Then $w$ satisfies
\begin{equation}\label{eq:Lpmotionw}
\left\{\begin{aligned}
w_{tt}+aw_t + bLw + bcw &= 0&&\text{in }\Omega\times (0,\infty)\\ 
w &=0&&\text{on }\partial\Omega\times (0,\infty)\\
w &=u_0&&\text{on }\Omega\times \{t=0\}.
\end{aligned}\right.
\end{equation}
We can expand the solution $w$ in a Fourier series
\begin{equation}\label{eq:fs}
w(x,t) = \sum_{k=1}^\infty d_k(t)v_k,
\end{equation}
where $v_1,v_2,\dots$ is an orthonormal basis for $L^2(\Omega)$ consisting of Dirichlet eigenfunctions of $L$ with corresponding eigenvalues 
\[0 <\lambda_1\leq \lambda_2\leq \lambda_3 \leq \cdots\]
That is, the function $v_k(x)$ satisfies
\begin{equation}\label{eq:eigen}
\left\{\begin{aligned}
Lv_k &= \lambda_k v_k &&\text{in }\Omega\\ 
v_k &=0&&\text{on }\partial\Omega.
\end{aligned}\right.
\end{equation}
Substituting \eqref{eq:fs} into \eqref{eq:Lpmotionw} we find that
\[d_k''(t) + ad_k'(t) + b(\lambda_k+c)d_k(t)=0.\]
The general solution is
\[d_k(t) = Ae^{r_{k,1}t}+Be^{r_{k,2}t},\]
where
\[r_{k,1} = -\frac{a}{2}+\frac{1}{2}\sqrt{a^2 - 4b(\lambda_k+c)}\text{ and } r_{k,2}=-\frac{a}{2}-\frac{1}{2}\sqrt{a^2 - 4b(\lambda_k+c)}.\]
Hence, the optimal decay rate is of the form $e^{-at/2}$ provided that
\[a^2 - 4b(\lambda_k+c) \leq 0 \text{ for all }k\geq  1.\]
This leads to the optimal choice for the damping coefficient
\begin{equation}\label{eq:aopt}
a = 2\sqrt{b(\lambda_1+c)}.
\end{equation}
With this choice of damping we have the convergence rate
\begin{equation}\label{eq:error}
|u(x,t)-u^*(x)|\leq C\exp\left( -\sqrt{b(\lambda_1+c)}\,t \right),
\end{equation}
for some constant $C>0$ depending on the initial condition $u_0$. 

We note that when $L$ is degenerate, so $\lambda_1=0$, the convergence rate is
\begin{equation}\label{eq:error2}
|u(x,t)-u^*(x)|\leq C\exp\left( -\sqrt{bc}\,t \right),
\end{equation}
with the optimal choice $a=2\sqrt{bc}$. In particular, if $c=0$ then the method does not converge, since there are undamped Fourier modes.

\section{Dirichlet problem}
\label{sec:DP}

As an illustrative example, we consider the Dirichlet problem
\begin{equation}\label{eq:DP}
\min\left\{ \frac{1}{2}\int_\Omega |\nabla u|^2\, dx \ :\ u\in H^1(\Omega) \text{ and }u=g \text{ on }\partial\Omega\right\}.
\end{equation}
Gradient descent corresponds to solving the heat equation
\begin{equation}\label{eq:heat}
\left\{\begin{aligned}
u_t -\Delta u &= 0&&\text{in }\Omega\times (0,\infty)\\ 
u &=g&&\text{on }\partial\Omega\times (0,\infty)\\
u &=u_0&&\text{on }\Omega\times \{t=0\},
\end{aligned}\right.
\end{equation}
while PDE acceleration corresponds to solving the damped wave equation
\begin{equation}\label{eq:dampwave}
\left\{\begin{aligned}
u_{tt}+au_t -b\Delta u &= 0&&\text{in }\Omega\times (0,\infty)\\ 
u &=g&&\text{on }\partial\Omega\times (0,\infty)\\
u &=u_0&&\text{on }\Omega\times \{t=0\}.
\end{aligned}\right.
\end{equation}
For concreteness, we consider the domain $\Omega:=[0,1]^2$.
Here, the first Dirichlet eigenvalue is $\lambda_1 = \pi^2$, and hence the optimal choice of the damping coefficient from \eqref{eq:aopt} is
\[a = 2\pi\sqrt{b}. \]
With this choice of $a$, the accelerated PDE method converges to the solution of the Dirichlet problem \eqref{eq:DP} at a rate  of $\exp(-b\pi t)$.
There are no parameters to select in the heat equation \eqref{eq:heat}. It is possible to show with a Fourier expansion that the solution of the heat equation \eqref{eq:heat} converges to the solution of the Dirichlet problem \eqref{eq:DP} at a rate of $\exp(-\pi^2 t)$. So far there is not much difference between the two equations---part of the difference comes from numerical stiffness, as explained below.

\subsection{Runtime-comparisons}

To solve both equations, we use the standard discretizations
\[u_t \approx \frac{u^{n+1}_{ij}-u^n_{ij}}{dt}, \  u_{tt}\approx\frac{u^{n+1}_{ij}-2u^n_{ij}+u^{n-1}_{ij}}{dt^2}, \text{ and}\]
\[\Delta u \approx \frac{u^n_{i+1,j}+u^n_{i-1,j}+u^n_{i,j+1}+u^n_{i,j-1}-4u^n_{ij}}{dx^2},\]
and explicit forward time stepping. The CFL condition for the damped wave equation is 
\[dt \leq \frac{dx}{\sqrt{2b}}.\]
By \eqref{eq:error}, the error decays like $\exp(-2\pi\sqrt{b} t)$. Therefore, to solve the problem to within a tolerance of $\eps$ we need $k$ iterations, where $k$ satisfies
\[\eps = C\exp\left( -2\pi \sqrt{b} kdt \right)=C\exp\left( -\sqrt{2}\pi kdx \right).\]
Hence, we need
\[k = \frac{1}{\sqrt{2}\pi dx}\log(C\eps^{-1})\]
iterations.  Note this is independent of $b$. Additionally, if we saturate the CFL condition and set $dt = dx/\sqrt{2b}$, then $b$ does not even appear in the numerical discretization of \eqref{eq:dampwave}. 

We contrast this with the heat equation \eqref{eq:heat}, where the CFL condition is $dt\leq dx^2/4$. Here, we need
\[k = \frac{4}{\pi^2 dx^2}\log(C\eps^{-1})\]
iterations for convergence. Table \ref{tab:DPsim} gives a comparison of the performance of PDE acceleration, gradient descent, and the primal dual algorithm from \cite{zosso2017efficient} for solving the Dirichlet problem on various grid sizes. We used the boundary condition $g(x_1,x_2) = \sin(2\pi x_1^2) + \sin(2\pi x_2^2)$ and ran each algorithm until the finite difference scheme was satisfied with an error of less than $dx^2$.  The initial conditions for both algorithms were $u(x,0)=g(x)$. For the primal dual algorithm~\cite{zosso2017efficient} we set $r_1=4\pi^2 r_2$, which is provably optimal using similar methods as in Section \ref{sec:optdamp}. We see that PDE acceleration is more than twice as fast as primal dual, while both significantly outperform standard gradient descent. We mention that our method converges to engineering precision very quickly (in about one fifth of the iteration count displayed in Table \ref{tab:DPsim}), while the majority of iterations are taken to resolve the solution up to the $O(dx^2)$ error (for the $1024\times 1024$ grid, the error tolerance is $dx^2 \approx 10^{-6}$). 
\begin{table}[!t]
\centering
\begin{tabular}{|c|c|c|c|c|c|c|}
 \hline
 &\multicolumn{2}{c|}{\textbf{Our Method}}&\multicolumn{2}{c|}{\textbf{Primal Dual~\cite{zosso2017efficient}}} &\multicolumn{2}{c|}{\textbf{Gradient Descent}}\\
\hline
Mesh & Time (s) & Iterations& Time (s) & Iterations & Time (s) & Iterations \\
\hline
$64^2$      &0.012&399   &0.02   &592    &0.148    &8404      \\
$128^2$     &0.05 &869   &0.11    &1384   &2.4      &3872     \\
$256^2$     &0.38  &1898  &1.0     &3027   &40       &174569     \\
$512^2$     &4.8  &4114  &13.1    &6831   &1032     &774606    \\
$1024^2$    &41   &8813  &115     &14674  &23391    &3399275   \\
\hline
\end{tabular}
\caption{Comparison of PDE acceleration, primal dual, and gradient descent for solving the Dirichlet problem. Runtimes are for C code.}
\label{tab:DPsim}
\end{table}

Of course, we do not recommend using PDE acceleration or primal dual methods for solving linear Poisson problems. In the linear setting there are faster algorithms available. For comparison, we show in Table \ref{tab:DPsim2} the runtimes for incomplete Cholesky preconditioned conjugate gradient and MINRES, Gauss-Seidel with successive overrelaxation, Matlab backslash, and the multigrid method with V-cycles. We see that PDE acceleration is comparable to preconditioned MINRES and conjugate gradient, while Matlab backslash (Cholesky factorization and triangular solve) and multigrid are significantly faster. We did not compare against FFT methods since they are specific to constant coefficient linear problems, which is rather restrictive, and would be comparable to multigrid and backslash. 

We mention that for general linear PDE, multigrid is normally much faster than Matlab backslash. The 2D Poisson equation is a special case where the linear system has a simple banded structure and direct solvers are highly efficient and comparable to multigrid. Moving to 3D problems one would expect multigrid to outperform backslash. Furthermore, our implementation of multigrid with V-cycles may not be optimal and further improvements could be possible. We emphasize that the Dirichlet problem is simply a toy illustrative example of PDE acceleration compared to gradient descent, and it is outside the scope of this paper to provide a thorough comparison to all linear solvers (e.g., other preconditioners, different multigrid cycling, etc.). Our real interest is in nonlinear problems with constraints. The linear methods that we compared against here do not extend directly to nonlinear problems, much less to obstacle constrained problems. The PDE acceleration method is formulated in the general nonlinear case, and is provably convergent with the same rate for nonlinear problems. In practice, we usually see similar computation times for nonlinear problems (see Section \ref{sec:app}).

\begin{table}[!t]
\centering
\begin{tabular}{|c|c|c|c|c|c|c|c|c|}
 \hline
 &\multicolumn{2}{c|}{\textbf{PCG}}&\multicolumn{2}{c|}{\textbf{MINRES}} &\multicolumn{2}{c|}{\textbf{Gauss-Seidel}}&\textbf{Backlash}&\textbf{Multigrid}\\
\hline
Mesh & Time & Iter.& Time & Iter. & Time & Iter. & Time &  Time\\
\hline
$64^2$     &0.027&56    &0.028   &54    &0.017    &197      & 0.013  & 0.036       \\
$128^2$    &0.08&120   &0.093    &114   &0.08      &432     & 0.037  & 0.048    \\
$256^2$    &0.67  &251  &0.78     &240   &0.72       &1020  & 0.12   & 0.13     \\
$512^2$    &7.9  &523  &9.4    &500   &6.1     &2046        & 0.61   & 0.55     \\
$1024^2$   &69   &1089  &81     &1044  &50    &4100         & 3      & 2.6        \\
\hline
\end{tabular}
\caption{ Runtimes in seconds for incomplete Cholesky preconditioned conjugate gradient, MINRES, Gauss-Seidel with successive overrelaxation, Matlab backslash, and V-cycle MultiGrid for solving the Dirichlet problem. The Gauss-Seidel method was implemented in C, while the other algorithms were implemented in Matlab. }
\label{tab:DPsim2}
\end{table}

\subsection{Initial condition}

We mention that the choice of initial condition can affect the computation time. If the initial condition does not continuously attain the boundary data, then fixing the boundary data on the first time step transfers a large amount of kinetic energy into the system that takes longer to dissipate. See Figure \ref{fig:energy} for a depiction of the kinetic, potential, and total energy for initial conditions $u(x,0)=g(x)$ and $u(x,0)=0$. The rate of convergence is not affected; it is just the constant in front, which corresponds to the initial energy, that is larger in this case. For example, in the simulation above, if we start from $u(x,0)=0$ on the $512\times 512$ grid, the computation takes 5529 iterations, or about roughly 1.4x more iterations compared to the initial condition $u(x,0) = g(x)$.
\begin{figure}
\centering
\subfloat[$u(x,0)=g(x)$]{\includegraphics[trim=20 0 30 20, clip = true, width=0.50\textwidth]{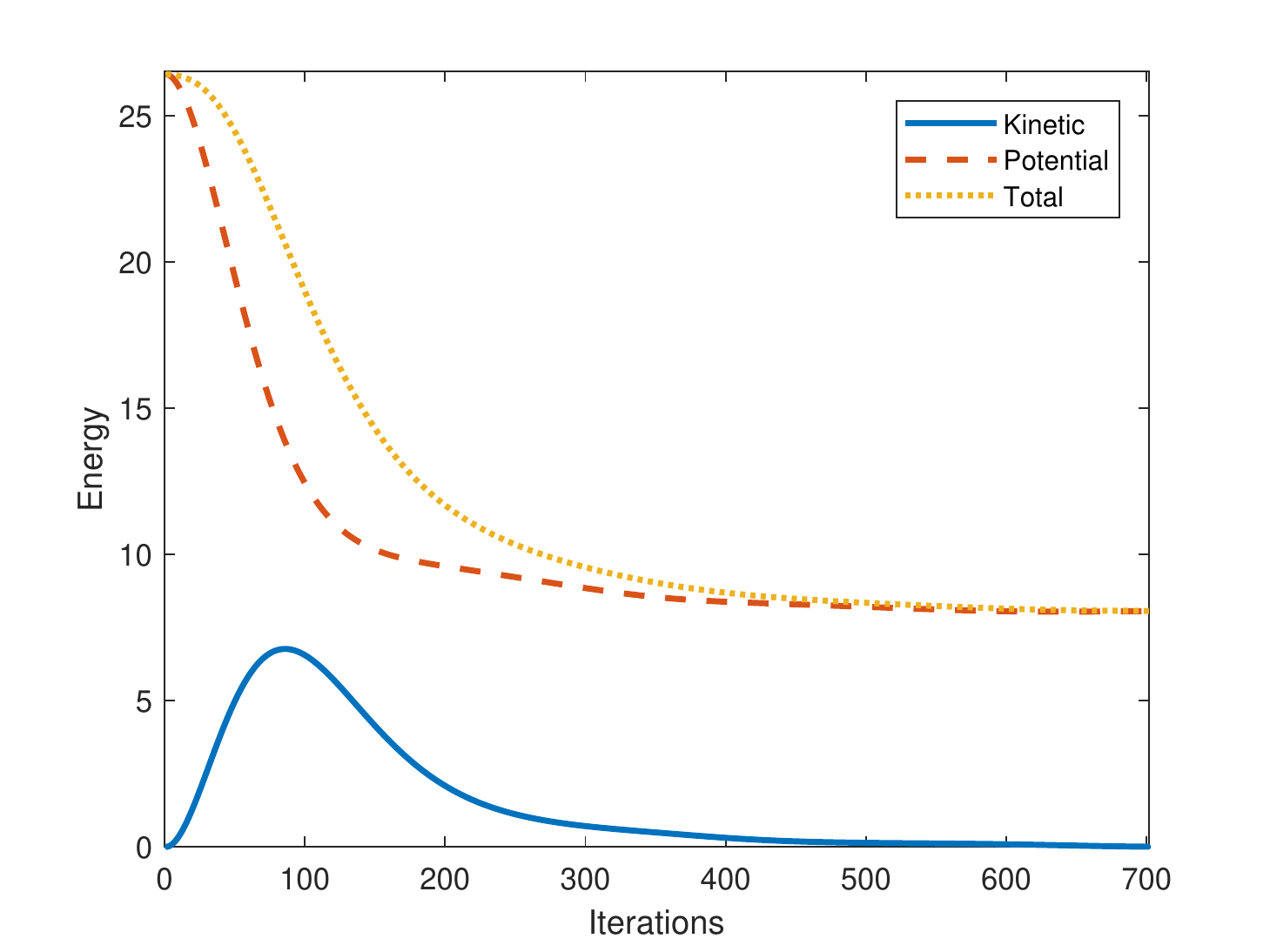}}
\subfloat[$u(x,0)=0$]{\includegraphics[trim=20 0 30 20, clip = true,width=0.50\textwidth]{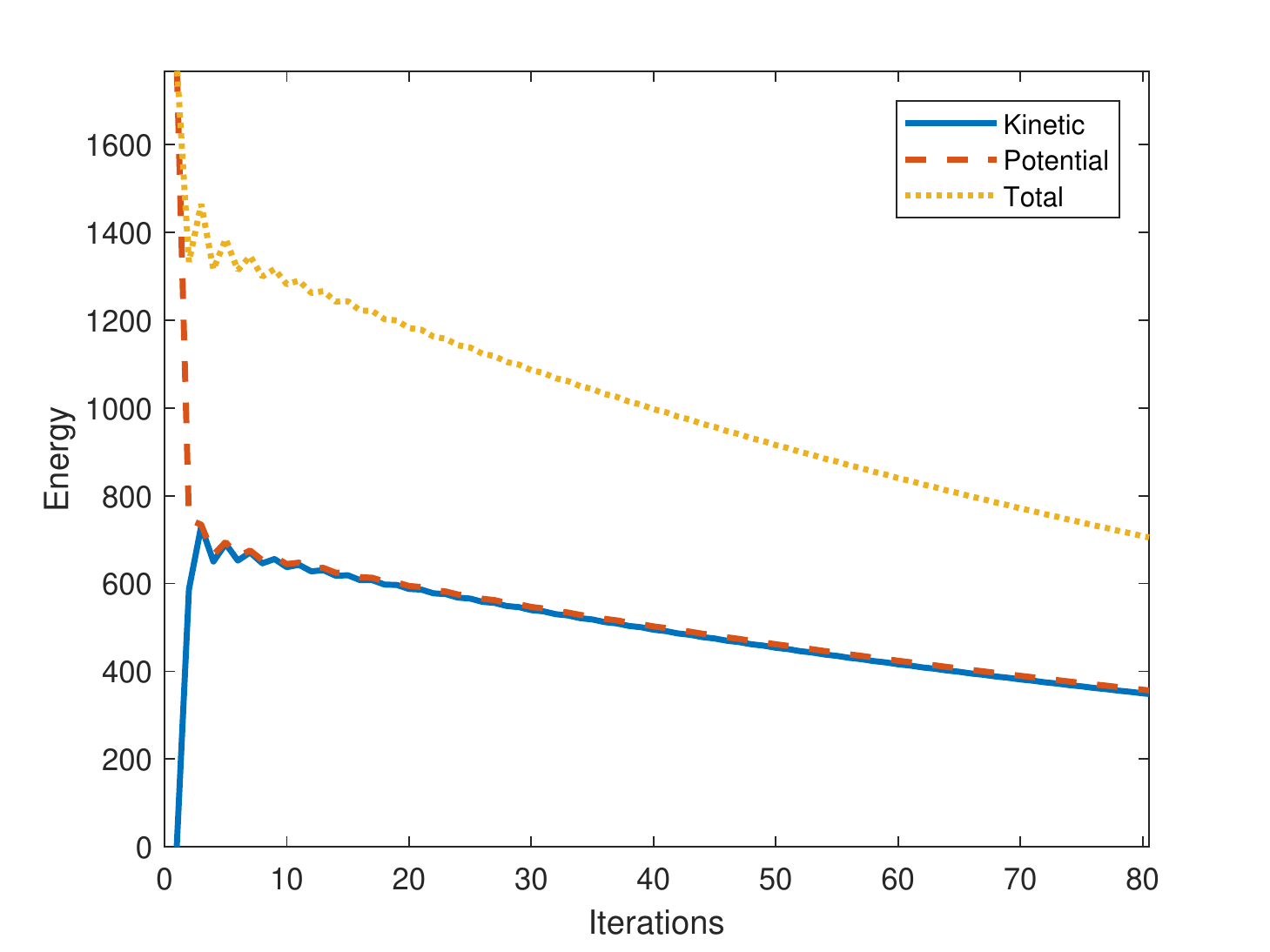}}
\caption{Comparison of energy dynamics for different initial conditions for solving the Dirichlet problem on a $512\times 512$ grid. When the initial condition does not continuously attain the boundary data, a nearly infinite amount of kinetic energy is transferred into the system at the first time step when the boundary conditions are set. This takes longer to dissipate and slows convergence.}
\label{fig:energy}
\end{figure}

This minor issue can be easily fixed in one of two ways. First, we can, if possible, choose an initial condition that continuously attains the boundary data. A second solution is to start from any arbitrary initial condition, and then change the boundary conditions gradually, instead of instantaneously. This can be done by gradient descent on the energy
\[I(u) = \frac{1}{2}\int_{\partial\Omega}(u-g)^2\,dS.\]
That is, on the boundary we solve the ordinary differential equation
\[u_t = g-u.\]
Both solutions give similar improvements in the speed of convergence in our simulations. To keep the algorithm simple, we chose not to implement either of these fixes in the rest of the paper. There are many other tricks that one can play with to speed up convergence, such as increasing the damping factor $a$ as a function of time, or incorporating multi-grid methods. We leave investigations along these lines to future work. 

\subsection{Connection to Primal Dual methods}
\label{sec:primaldual}

In the context of the Dirichlet problem, there is a close connection between primal dual methods~\cite{zosso2017efficient}, and PDE acceleration. This was explored briefly in~\cite{zosso2017efficient}, where it was observed that their primal dual algorithm for solving the Dirichlet problem can be interpreted as a numerical scheme for a damped wave equation. We go further here, and give a PDE interpretation of primal dual methods and show exactly how they are related to PDE acceleration for the Dirichlet problem.

We recall that the convex dual, or Legendre-Fenchel transform, of a function $\Phi:\R^n \to \R$ is 
\begin{equation}\label{eq:dual}
\Phi^*(p) = \max_{x \in \R^n}\{ x\cdot p - \Phi(x)\}.
\end{equation}
If $\Phi$ is convex, then by convex duality we have $\Phi^{**}:= (\Phi^*)^*=\Phi$. We assume $\Phi:\R^n \to \R$ is convex and consider for simplicity the problem
\begin{equation}\label{eq:cprob}
\min_{u}\int_\Omega \Phi(\nabla u)\, dx,
\end{equation}
subject to a Dirichlet boundary condition $u=g$. A primal dual algorithm for solving \eqref{eq:cprob} expresses $\Phi$ through its convex dual $\Phi^*$ giving the initially more looking complicated formation
\begin{equation}\label{eq:cprob2}
\min_{u}\max_{p}\int_\Omega p\cdot \nabla u - \Phi^*(p)\, dx.
\end{equation}
Here, $u:\Omega \to \R$ is the primal variable and $p:\Omega \to \R^n$ is the dual variable.  Given $p\cdot \vb{n}=0$ on $\partial\Omega$, we can integrate by parts to express the problem as
\begin{equation}\label{eq:cprob3}
\min_{u}\max_{p}\int_\Omega -u \,\div(p) - \Phi^*(p)\, dx.
\end{equation}
The primal dual algorithm in \cite{zosso2017efficient} solves \eqref{eq:cprob2} by alternating proximal updates on $p$ and $u$ until convergence (see Section \ref{sec:pdual}).   In the continuum this is equivalent to jointly performing gradient descent on $u$ and gradient ascent on $p$, which corresponds to the coupled PDEs
\begin{equation}\label{eq:pdpde}
\left\{\begin{aligned}
p_t  &= a(t)(\nabla u - \nabla \Phi^*(p))\\ 
u_t  &=\div(p).
\end{aligned}\right.
\end{equation}
The factor $a(t)$ is the ratio of the time steps between the proximal updates on $u$ and $p$ in the primal dual algorithm; in the notation of \cite{zosso2017efficient}, $a = r_1/r_2$. To the best of our knowledge, this PDE interpretation of primal dual algorithms is a new observation.  In particular, we use this observation to optimally set ratio $r_1/r_2$ for the primal dual method in Section \ref{sec:pdual}.

For the Dirichlet problem, $\Phi(p) =\Phi^*(p) = \frac{1}{2}|p|^2$ and \eqref{eq:pdpde} becomes
\begin{equation}\label{eq:pdpdedp}
\left\{\begin{aligned}
p_t  &= a(t)(\nabla u - p)\\ 
u_t  &=\div(p).
\end{aligned}\right.
\end{equation}
In this case we can eliminate the dual variable and we obtain the damped wave equation
\begin{equation}\label{eq:pdwave}
u_{tt} + a(t) u_t - a(t)\Delta u = 0.
\end{equation}
Contrasting this with \eqref{eq:dampwave}, we see the key difference between primal dual and PDE acceleration methods is that primal dual methods are unable to adjust the damping coefficient $a(t)$ independently of other terms in the PDE. 

This explicit connection between primal dual and PDE acceleration seems to be a coincidence for the Dirichlet problem and does not hold in any other case that we are aware of. In particular, it seems necessary that $\nabla \Phi^*(p)$ is linear in $p$ in order to convert the system \eqref{eq:pdpde} into a scalar wave equation in $u$. We can eliminate the primal variable by differentiating the equation for $p_t$ in \eqref{eq:pdpdedp} to obtain
\begin{equation}\label{eq:pdualpde}
p_{tt} + \left( aDF(p) + \frac{a'(t)}{a(t)} \right)p_t = a(t)\nabla \div(p),
\end{equation}
where $F(p):=\nabla \Phi^*(p)$. However, this is no longer a wave equation.

\section{Obstacle problems}
\label{sec:obstacleproblems}

Consider the standard obstacle problem
\begin{equation}\label{eq:obP}
\min_{u\in \A} E[u] = \int_{\Omega}\Phi(x,\nabla u)\, dx,
\end{equation}
where
\begin{equation}\label{eq:Aset}
\A = \left\{u \in H^1(\Omega) \, : \, u \geq \varphi \text{ in } \Omega \text{ and }u=g \text{ on } \partial\Omega\right\},
\end{equation}
and the obstacle $\varphi$ satisfies $\varphi\leq g$ on $\partial\Omega$. 
%Recall \eqref{eq:gE} that
%\[\nabla E[u] = L_z(x,u,\nabla u) -\div\left( \nabla_p L(x,u,\nabla u)\right)\]
%satisfies
%\[\frac{d}{d\eps}\Big\vert_{\eps=0}E[u + \eps v] = \int_\Omega \nabla E[u] v \, dx.\]
We recall that the solution $u$ of the obstacle problem \eqref{eq:obP} satisfies the boundary value problem
\begin{equation}\label{eq:VI}
\left\{\begin{aligned}
\max\{-\nabla E[u],\varphi-u\}&= 0&&\text{in }\Omega\\
u &=g&&\text{ on }\partial\Omega.
\end{aligned}\right.
\end{equation}
This is a classical fact; we sketch the formal argument for completeness. If $v\in C^\infty(\bar{\Omega})$ is \emph{nonnegative}, then for any $\eps \geq 0$ we have $u + \eps v\in \A$ and hence
\[E[u + \eps v] -E[u] \geq 0.\]
Dividing by $\eps$ and sending $\eps\to 0^+$ yields
\begin{equation}\label{eq:VIt}
\int_\Omega \nabla E[u] v\, dx = \frac{d}{d\eps}\Big\vert_{\eps=0}E[u + \eps v]  \geq 0
\end{equation}
for all nonnegative $v$. Hence $\nabla E[u]\geq 0$ in $\Omega$. Furthermore, on any ball $B(x,r)\subset \Omega$ where $u > \varphi$, we can relax the nonnegativity constraint on $v$ and still ensure $u+\eps v\in \A$ for small $\eps>0$. It follows that $\nabla E[u] = 0$ on the set $\{u > \varphi\}$, which establishes \eqref{eq:VI}. We note that solutions of \eqref{eq:VI} are properly interpreted in the viscosity sense \cite{bardi2008optimal,crandall1992user}.

\subsection{PDE acceleration}
\label{sec:pdeacc}

We now show how to apply PDE acceleration to the obstacle problem \eqref{eq:obP}. For the moment, we consider the $L^2$-penalized formulation
\begin{equation}\label{eq:L2P}
\min_{u\in H^1(\Omega)}\left\{ \int_\Omega \Phi(x,\nabla u) + \frac{\mu}{2}(\varphi-u)_+^2\, dx \, : \,  u=g \text{ on } \partial\Omega\right\}.
\end{equation}
Theorem \ref{thm:rate} guarantees that PDE acceleration will converge with a linear rate for any finite $\mu>0$. However, we need to send $\mu\to \infty$ to recover the solution of the constrained problem \eqref{eq:obP}. We will see, however, that the accelerated PDE method for \eqref{eq:L2P} is insensitive to the choice of $\mu$, and can be easily solved for $\mu>0$ large, and in numerics we send $\mu\to \infty$ and obtain a very simple scheme for solving \eqref{eq:obP}. We explain in more detail below.

The PDE accelerated equations of motion \eqref{eq:motion} for the penalized problem \eqref{eq:L2P}  are 
\begin{equation}\label{eq:L2Pm}
u_{tt}+ au_t = -\nabla E[u] + \mu (\varphi-u)_+,
\end{equation}
subject to the Dirichlet condition $u=g$ on $\partial\Omega$, where
\[\nabla E[u] =\text{div}\left( \nabla_p\Phi(x,\nabla u) \right).\]
 We now discretize in time using the standard finite differences
\[u_t \approx \frac{u^{n+1}-u^n}{dt}, \ \text{ and } \ u_{tt}\approx\frac{u^{n+1}-2u^n+u^{n-1}}{dt^2}. \]
The important point now is that we handle the penalty term \emph{implicitly}. The discrete in time scheme becomes
\begin{equation}\label{eq:L2Ps}
(1+adt)u^{n+1}-\mu dt^2(\varphi-u^{n+1})_+ = (2+adt)u^n - u^{n-1} - dt^2\nabla E[u].
\end{equation}
Since the left hand side is strictly increasing in $u^{n+1}$, there is a unique solution of \eqref{eq:L2Ps}. We can compute the solution explicitly as follows:
\begin{equation}\label{eq:scheme1}
\left\{\begin{aligned}
v &=\frac{(2+adt)u^n - u^{n-1} - dt^2\nabla E[u]}{1+adt}  \\
w &=\frac{(2+adt)u^n - u^{n-1} - dt^2\nabla E[u] + \mu dt^2\varphi}{1+adt + \mu dt^2}  \\ 
u^{n+1}(x) &= \begin{cases}
v(x),&\text{if } v(x) \geq \varphi(x)\\
w(x),&\text{otherwise.}\end{cases}
\end{aligned}\right.
\end{equation}
The scheme is simple to implement, and the CFL condition is dictated solely by the discretization of $\nabla E[u]$ and is independent of the penalty $\mu$. In practice, we find the algorithm is completely insensitive to the choice of $\mu$ and runs efficiently for, say, $\mu > 10^{10}$. 

Instead of choosing a very large value for $\mu$, we can in fact send $\mu \to \infty$ in the scheme \eqref{eq:scheme1}. Indeed, the only place $\mu$ appears is in the update for $w$, and taking the limit as $\mu\to \infty$ we find that $w=\varphi$. Hence, we obtain the simpler scheme
\begin{equation}\label{eq:scheme2}
\boxed{\left\{\begin{aligned}
v &=\frac{(2+adt)u^n - u^{n-1} - dt^2\nabla E[u]}{1+adt}  \\
u^{n+1}(x) &= \max\{v(x),\varphi(x)\}
\end{aligned}\right.}
\end{equation}
as the limit of \eqref{eq:scheme1} as $\mu\to\infty$. In our simulations, we use the scheme \eqref{eq:scheme2}, since it is simpler and more intuitive, but the results are identical, up to machine precision, using scheme \eqref{eq:scheme1} with $\mu=10^{10}$.  We use finite differences to discretize $\nabla E[u]$ in this paper---in particular we discretize the gradient and divergence separately, using forward differences for $\nabla u$ and backward differences for the divergence. We set the damping parameter to be the optimal value $a=2\pi$ from the linear analysis in Section \ref{sec:optdamp}. We run the iterations \eqref{eq:scheme2} until 
\begin{equation}\label{eq:residual}
|\max\{-\nabla E[u^n],\varphi-u^n\}| \leq dx\|\varphi\|_{L^\infty}
\end{equation}
at all grid points.    

We should note there is nothing specific about finite difference schemes in this accelerated framework; one could just as easily use finite elements, spectral methods, or any other numerical PDE method. Once a discretization is settled on, the time step restriction on $dt$ follows from the CFL condition, which is straightforward to derive (see Section \ref{sec:app}).

\subsection{Primal dual algorithms}
\label{sec:pdual}

Recently in \cite{zosso2017efficient}, a primal dual algorithm was proposed for obstacle problems, and it was shown to be several orders of magnitude faster than existing state of the art methods. We compare PDE acceleration against an improved version of the primal dual algorithm from \cite{zosso2017efficient}, which is described below.

The primal dual algorithm solves the minimal surface obstacle problem
\begin{equation}\label{eq:obPM}
\min_{u\in \A} \int_\Omega \sqrt{1+|\nabla u|^2}\, dx,
\end{equation}
following roughly the outline in Section \ref{sec:primaldual}. We compute the convex dual of $\Phi(x) = \sqrt{1+|x|^2}$ to be
\begin{equation}\label{eq:cdual}
\Phi^*(p) = 
\begin{cases}
-\sqrt{1-|p|^2},&\text{if }|p|\leq 1\\
\infty,&\text{if }|p|> 1.
\end{cases}
\end{equation}
The primal dual algorithm from \cite{zosso2017efficient} for solving \eqref{eq:obPM} solves the equivalent primal dual formulation
\[\min_{u\geq \phi}\max_{|p|\leq 1}\int_{\Omega}p\cdot \nabla u + \sqrt{1-|p|^2}\, dx\]
by alternatively updating the primal variable $u$ and the dual variable $p$ with proximal updates. The full algorithm is given below.
\begin{equation}\label{eq:PD}
\left\{\begin{aligned}
p^{n+1}(x) &= \argmin_{|p|\leq 1} \left\{ -\nabla \bar{u}^n(x)\cdot p - \sqrt{1-|p|^2} + \frac{1}{2r_1}|p-p^{n}(x)|^2\right\}\\
u^{n+1} &= \max\{\phi,u^n + r_2\div(p^{n+1})\}\\
\bar{u}^{n+1} &= 2u^{n+1}-u^n.
\end{aligned}\right.
\end{equation}
The final step is an overrelaxation, and we set the Dirichlet condition $u=g$ on $\partial \Omega$ at each step. If the problem is discretized on a grid with spacing $dx$, then the method converges for any choices of $r_1,r_2$ with $r_1r_2\leq dx^2/6$ \cite{zosso2017efficient}. 
In fact, as noticed in Section \ref{sec:primaldual}, the ratio $r_1/r_2$ plays the role of the damping parameter $a$ in PDE acceleration \eqref{eq:scheme2}, allowing us to set $r_1/r_2=4\pi^2$, which is optimal for $\Omega=[0,1]^n$ via the linear analysis in Section \ref{sec:optdamp}.

While the update in the dual variable $p(x)$ is pointwise, it is not an explicit update and involves solving a constrained convex optimization problem. We contrast this with the PDE acceleration update \eqref{eq:scheme2} which is simple and explicit. 
In \cite{zosso2017efficient} the authors propose to solve the dual problem with iteratively re-weighted least squares (IRLS), that is, they define $\psi^0=p^n(x)$ and iteratively solve
\begin{equation}\label{eq:IRLS}
\psi^{k+1}=\argmin_{|\psi|\leq 1}\left\{-\nabla \bar{u}^n(x)\cdot \psi + \frac{1}{2}\frac{|\psi|^2 - 1}{\sqrt{1-|\psi^{k}|^2}} + \frac{1}{2r_1}|\psi-p^n(x)|^2\right\},
\end{equation}
setting $p^{n+1}(x) =\lim_{k\to\infty}\psi^k$. Actually, in \cite{zosso2017efficient} the factor of $\frac{1}{2}$ in front of the term $\frac{|\psi|^2 - 1}{\sqrt{1-|\psi^{k}|^2}}$ is missing; this is required to ensure that if the iterations converge, then the fixed point satisfies the correct optimality conditions for the original dual problem. It is claimed in \cite{zosso2017efficient,zosso2016minimal} that the IRLS iterations converge for $r_1$ sufficiently small. However, inspecting the proof in \cite[Lemma 4.4]{zosso2016minimal} it appears the restriction on $r_1$ is impractical for $dx\ll 1$. In practice, we find that for $dx \ll 1$ the IRLS iterations drift outside of the unit ball $|\psi^k|\leq 1$ after only a few iterations, in which case \eqref{eq:IRLS} is not well-defined and the iterations cannot continue. Simple fixes that we tried, such as projecting back onto the unit ball, were found to not be useful. We note we observed failure of the IRLS iterations even for small values of $r_1$.\footnote{Even if the IRLS iterations were to converge for extremely small $r_1>0$, the performance of the primal dual method is highly sensitive to the ratio $r_1/r_2$, and convergence of the primal dual iterations is extremely slow for very small or very large $r_1$.}

We propose another method for solving the dual problem that is robust and works for any value of $r_1$ and $dx$. We describe our method below. For convenience, let us define
\[F(p)=-\nabla \bar{u}(x)\cdot p - \sqrt{1-|p|^2} + \frac{1}{2r_1}|p-p^{n}(x)|^2.\]
We also compute
\begin{equation}\label{eq:Fg}
\nabla F(p) = -\frac{1}{r_1}(p^n(x) + r_1\nabla \bar{u}(x)) + \frac{p}{\sqrt{1-|p|^2}} + \frac{1}{r_1}p.
\end{equation}
Then the dual problem is $p^{n+1}(x)=\argmin_{|p|\leq 1}F(p)$. We first note that since $F( (1-\eps)p)< F(p)$ for any $p$ with $|p|=1$ and $\eps>0$ sufficiently small, we must have $|p^{n+1}(x)|<1$, and so $\nabla F(p^{n+1}(x))=0$. For any $\eta$ with $\eta\cdot p^{n+1}(x) = 0$ we have
\[0 =\eta\cdot r_1\nabla F(p^{n+1}(x)) = -(p^n(x) + r_1\nabla \bar{u}(x))\cdot \eta.\]
Therefore, $p^{n+1}(x)=\alpha q(x)$ for some $\alpha\in (-1,1)$, where 
\begin{equation}\label{eq:qofx}
q(x)=
\begin{cases}
\frac{p^n(x) + r_1\nabla \bar{u}(x)}{|p^n(x) + r_1\nabla \bar{u}(x)|},&\text{if } p^n(x) + r_1\nabla \bar{u}(x) \neq 0,\\
0,&\text{otherwise.}
\end{cases}
\end{equation}
The value of $\alpha\in (-1,1)$ is the unique root of the function
\begin{equation}\label{eq:fbis}
f(\alpha):=q(x)\cdot  r_1\nabla F(\alpha q(x)) = \alpha + \frac{r_1\alpha}{\sqrt{1-\alpha^2}}-|p^n + r_1\nabla \bar{u}(x)|.
\end{equation}
Since $F$ is strictly convex, $f$ is strictly increasing in $\alpha$, and so we can compute the root of $f$ with a simple bisection search. Inspecting \eqref{eq:fbis} we see that $\alpha\in [0,\min\{1,N\}]$, where $N=|p^n + r_1\nabla \bar{u}(x)|$. For $\alpha$ is this range, we can perform some algebraic manipulations on $f$ to see that we can instead bisect on the function 
\[g(\alpha) = r_1^2 \alpha^2 - (1-\alpha^2)(\alpha-N)^2,\]
which does not involve the costly square root operation. The method is guaranteed to converge, and the accuracy is directly related to the number of bisection iterations, that is
\[\text{Bisection Search Error} \leq \frac{1}{2^{k+1}},\]
where $k$ is the number of bisections.

We emphasize that the IRLS method proposed in \cite{zosso2017efficient} does not converge for any of the simulations presented in Section \ref{sec:app}. Thus, the new bisection method is required to allow comparisons against the primal dual algorithm for the nonlinear minimal surface problem. 

In our implementation of the primal dual method, we use forward differences for $\nabla u$ and backward differences for the divergence, as in \cite{zosso2017efficient}. We set the ratio $r_1/r_2=4\pi^2$ based on an optimal linear analysis as in Section \ref{sec:optdamp}, along with the CFL condition $r_1r_2 \leq dx^2/6$ \cite{zosso2017efficient}. We choose the number of bisection iterations so that the dual problem is solved to an accuracy of $\eps dx^2$, where $\eps$ is the accuracy to which we wish to solve the obstacle problem. This requires around 30 iterations for most of our simulations. We run the algorithm until the residual condition \eqref{eq:residual} is satisfied.

\section{Experiments}
\label{sec:app}

We give here some applications of the PDE acceleration method for solving various obstacle problems. All algorithms, including our improved primal dual method and the $L^1$-penalty method \cite{tran20151}, were implemented in C and run on a laptop with a 64-bit 2.20GHz CPU. The code for all simulations is available on GitHub: \url{https://github.com/jwcalder/MinimalSurfaces}.

\subsection{Minimal surface obstacle problems}

We first consider the constrained minimal surface problem
\begin{equation}\label{eq:cms}
\min \left\{ \int_{\Omega} \sqrt{1 + |\nabla u|^2} \,dx \ : \ u\in H^1_0(\Omega) \text{ and }u\geq \phi\right\}.
\end{equation} 
Here, $\phi:\Omega\to \R$ is the obstacle and $\Omega=[0,1]^2$. We solve the problem with the PDE acceleration scheme \eqref{eq:scheme2} using the implementation described in Section \ref{sec:pdeacc}. Here, 
\[\nabla E[u] = -\text{div}\left( \frac{\nabla u}{\sqrt{1 + |\nabla u|^2}} \right),\]
and the CFL condition dictates that $dt < dx/\sqrt{2}$. We set $dt=0.8dx/\sqrt{2}$.

\begin{figure}
\centering
\subfloat[Obstacle $\varphi:=\varphi_1/100$]{\includegraphics[trim = 40 40 40 40, clip=true,width=0.50\textwidth]{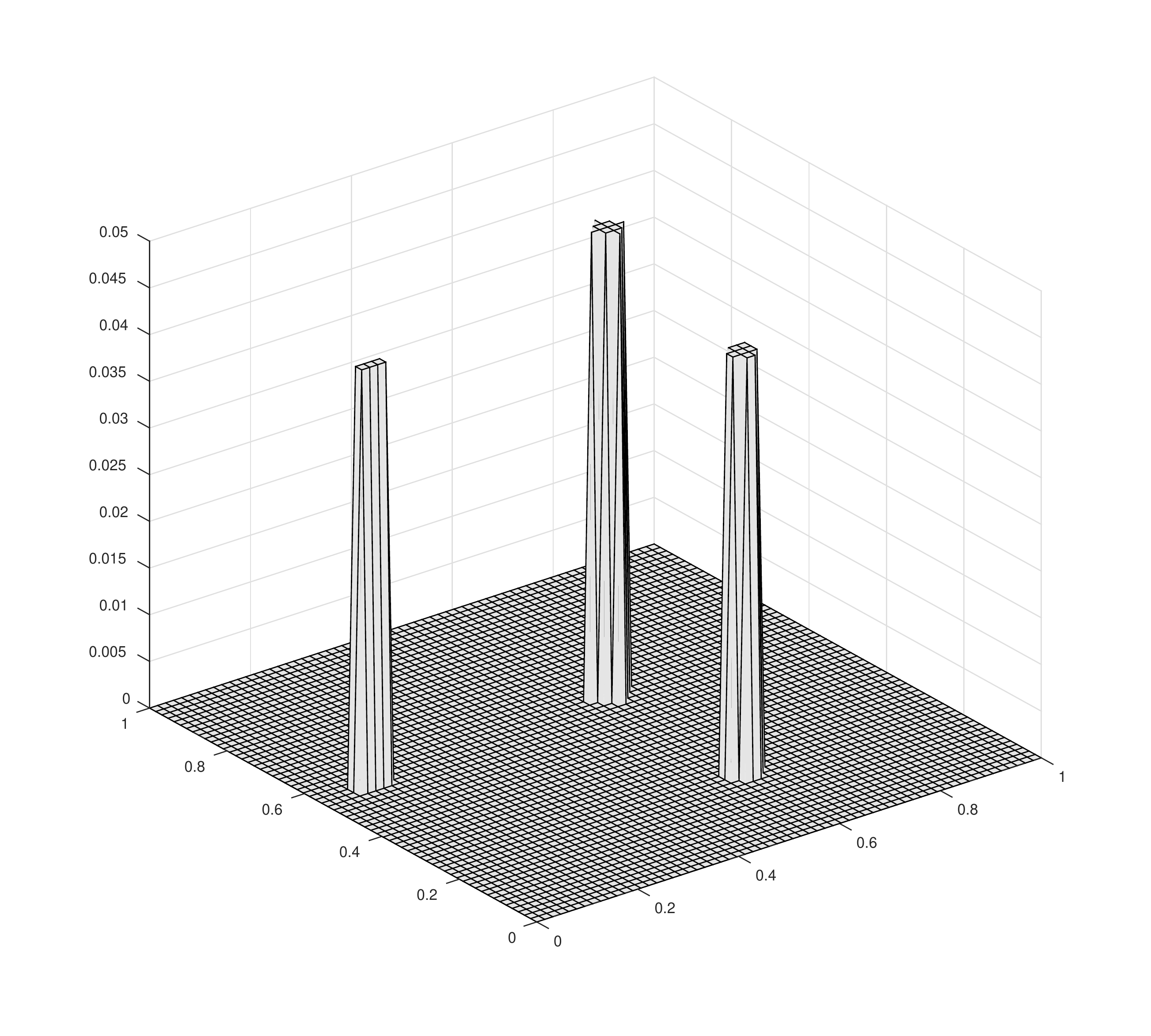}}
\subfloat[Minimal surface for $\varphi:=\varphi_1/100$]{\includegraphics[trim = 40 40 40 40, clip=true,width=0.50\textwidth]{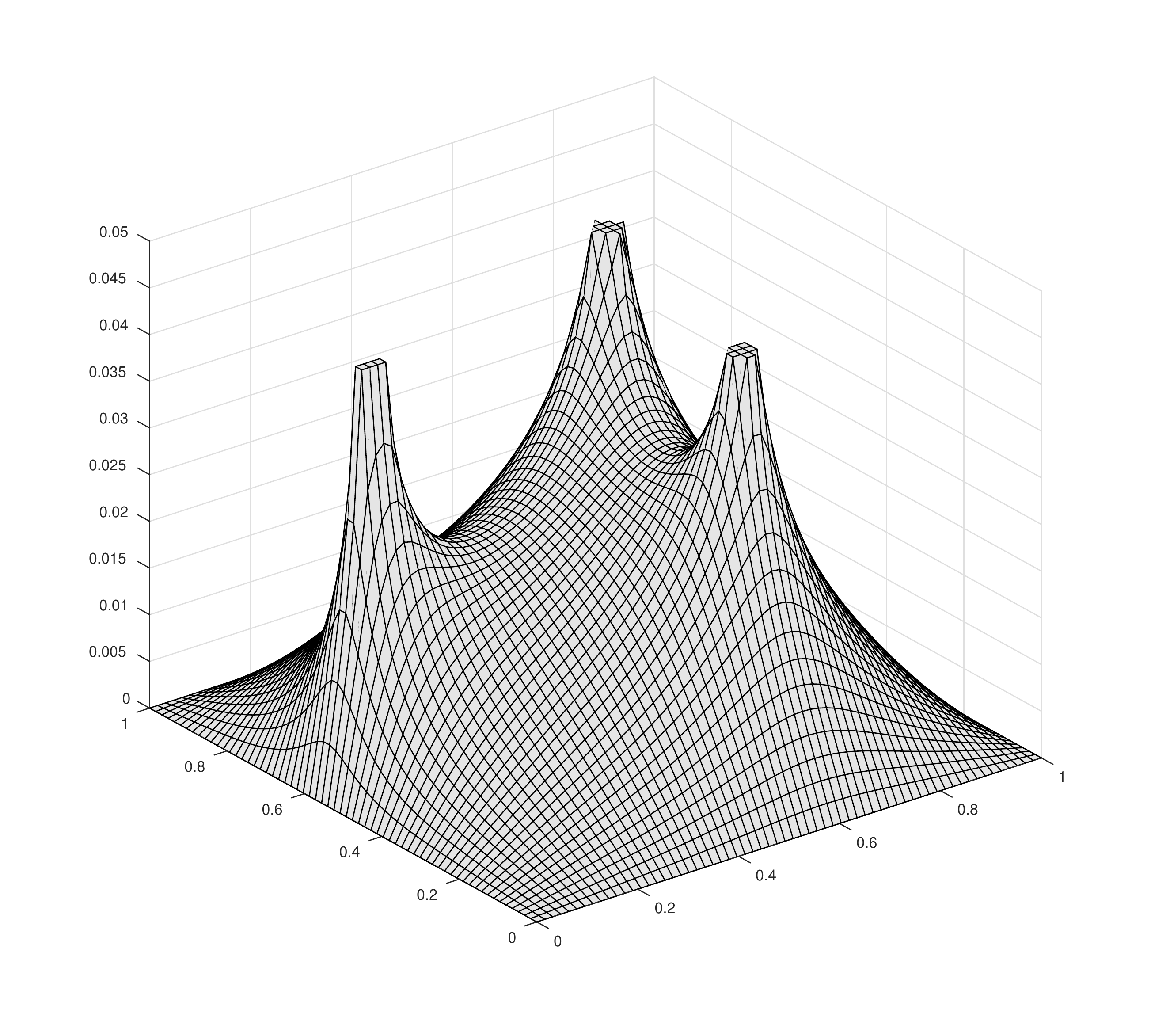}\label{fig:MS11}}

\subfloat[Minimal surface $\varphi:=\varphi_1/50$]{\includegraphics[trim = 40 40 40 40, clip=true,width=0.50\textwidth]{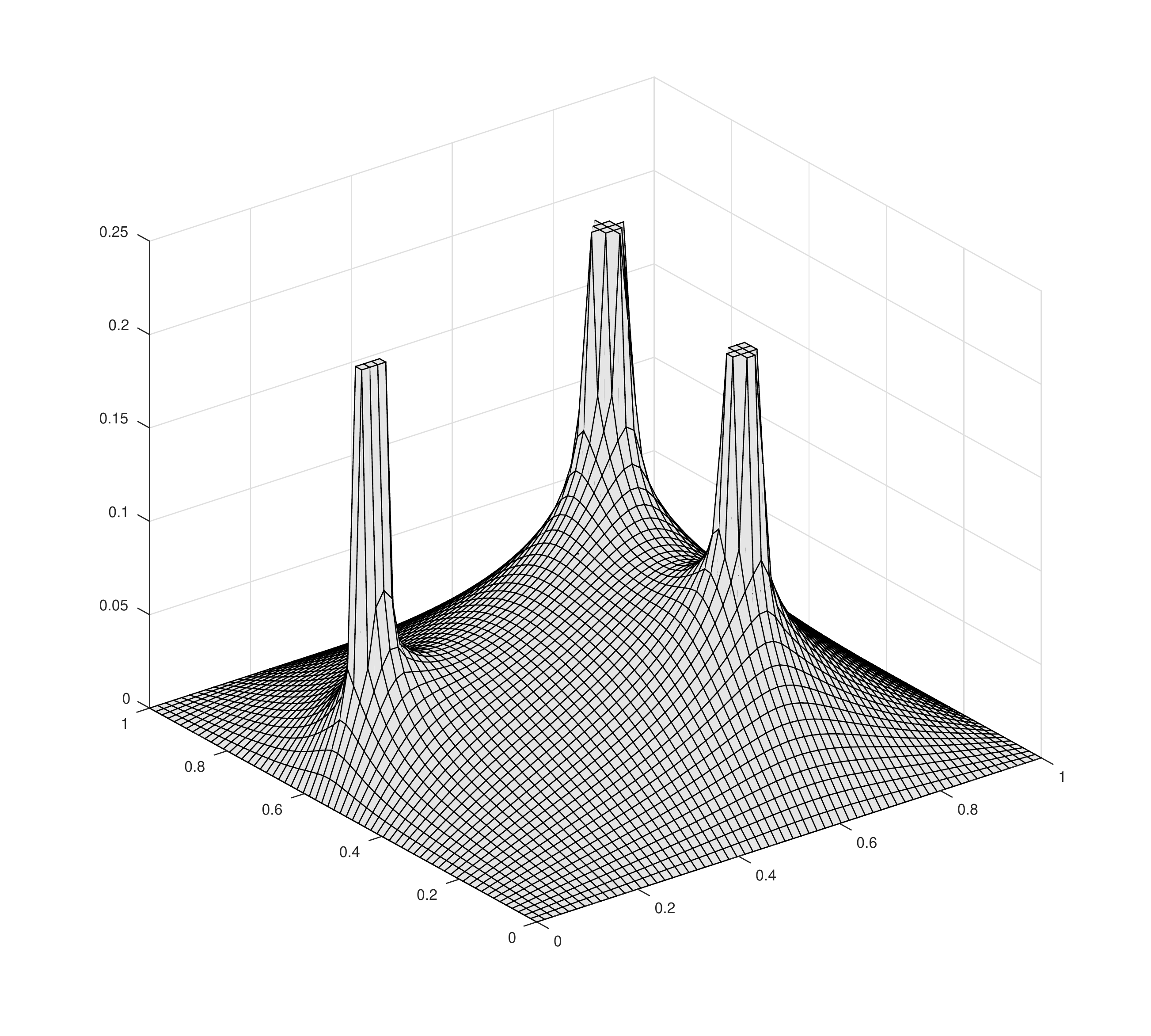}\label{fig:MS15}}
\subfloat[Minimal surface $\varphi:=\varphi_1/10$]{\includegraphics[trim = 40 40 40 40, clip=true,width=0.50\textwidth]{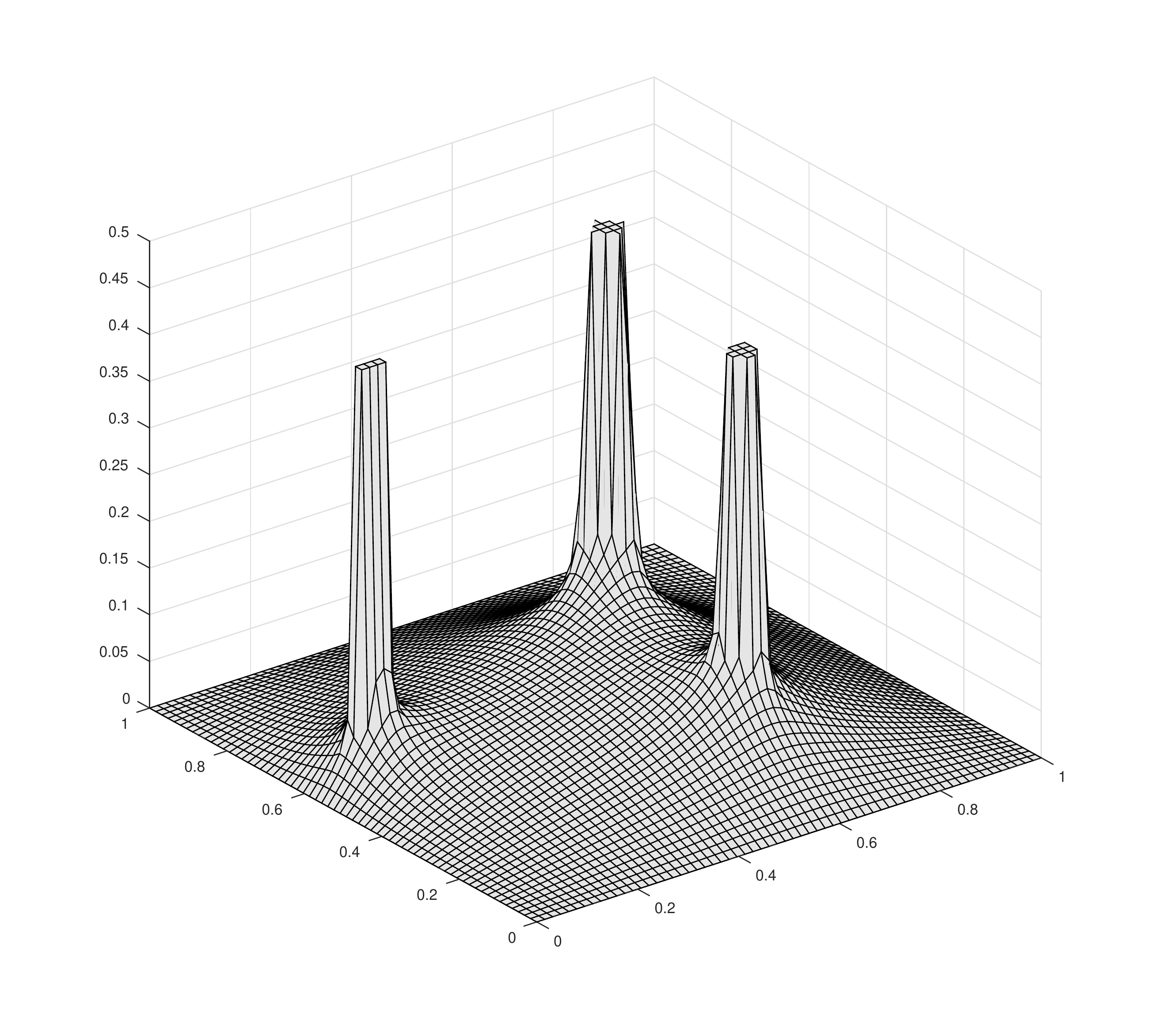}\label{fig:MS110}}
\caption{Minimal surfaces for obstacle $\varphi_1$ computed with PDE acceleration on a $64\times 64$ grid.}
\label{fig:MS1}
\end{figure}

The first obstacle we consider is from \cite{zosso2017efficient} and is given by
\begin{equation}\label{eq:o1}
\varphi_1(x_1,x_2)=
\begin{cases}
5&\text{for }|x_1-0.6|+|x_2-0.6| < 0.04\\
4.5&\text{for }(x_1-0.6)^2 + (x_2-0.25)^2 < 0.001\\
4.5&\text{for }x_2=0.57 \text{ and } 0.075 < x_1 < 0.13\\
0&\text{otherwise.}
\end{cases}
\end{equation}
Figure \ref{fig:MS1} shows the obstacle $\phi:=\phi_1/100$, and the minimal surfaces computed with the PDE acceleration algorithm for  $\phi:=\phi_1/100$, $\phi:=\phi_1/20$, and $\phi:=\phi_1/10$.  Figure \ref{fig:MS11} shows a short obstacle with small deflections, and the solution in this case is well-approximated by the linearized minimal surface problem
\begin{equation}\label{eq:cmsl}
\min \left\{ \int_{\Omega} \frac{1}{2}|\nabla u|^2\,dx \ : \ u\in H^1_0(\Omega) \text{ and }u\geq \phi\right\}.
\end{equation} 
The obstacles in Figures \ref{fig:MS15} and \ref{fig:MS110} are significantly taller and the true minimal surfaces are not well-captured by linearization. In particular, the minimal surface for $\varphi:=\varphi_1$ is nearly identically zero, which we show in Figure \ref{t}.

We remark that this is in contrast with previous work (see \cite[Fig.~3(d)]{zosso2017efficient}), which reported that the minimal surface for $\phi_1$ resembles Figure \ref{fig:MS11} (the minimal surface for $\phi_1/100$).  We show the true minimal surface for $\phi_1$, computed by our algorithm, in Figure \ref{t}, and the solution of the linearized minimal surface equation in Figure \ref{l}, which agrees very closely by eye with \cite[Fig.~3(d)]{zosso2017efficient}, suggesting that \cite{zosso2017efficient} are in fact solving the linearized minimal surface problem, and not the true nonlinear minimal surface problem. We suspect this is due to the authors of \cite{zosso2017efficient} mistakenly taking $dx=1$ in their code, which has the effect of drastically reducing the height of the obstacles and putting one in the linear setting. Since the nonlinear minimal surface equation is not homogeneous in the gradient, one cannot scale away the spatial resolution as can be done for the linearized equation.
\begin{figure}
\centering
\subfloat[True minimal surface]{\includegraphics[clip=true, trim = 30 20 20 20, width=0.5\textwidth]{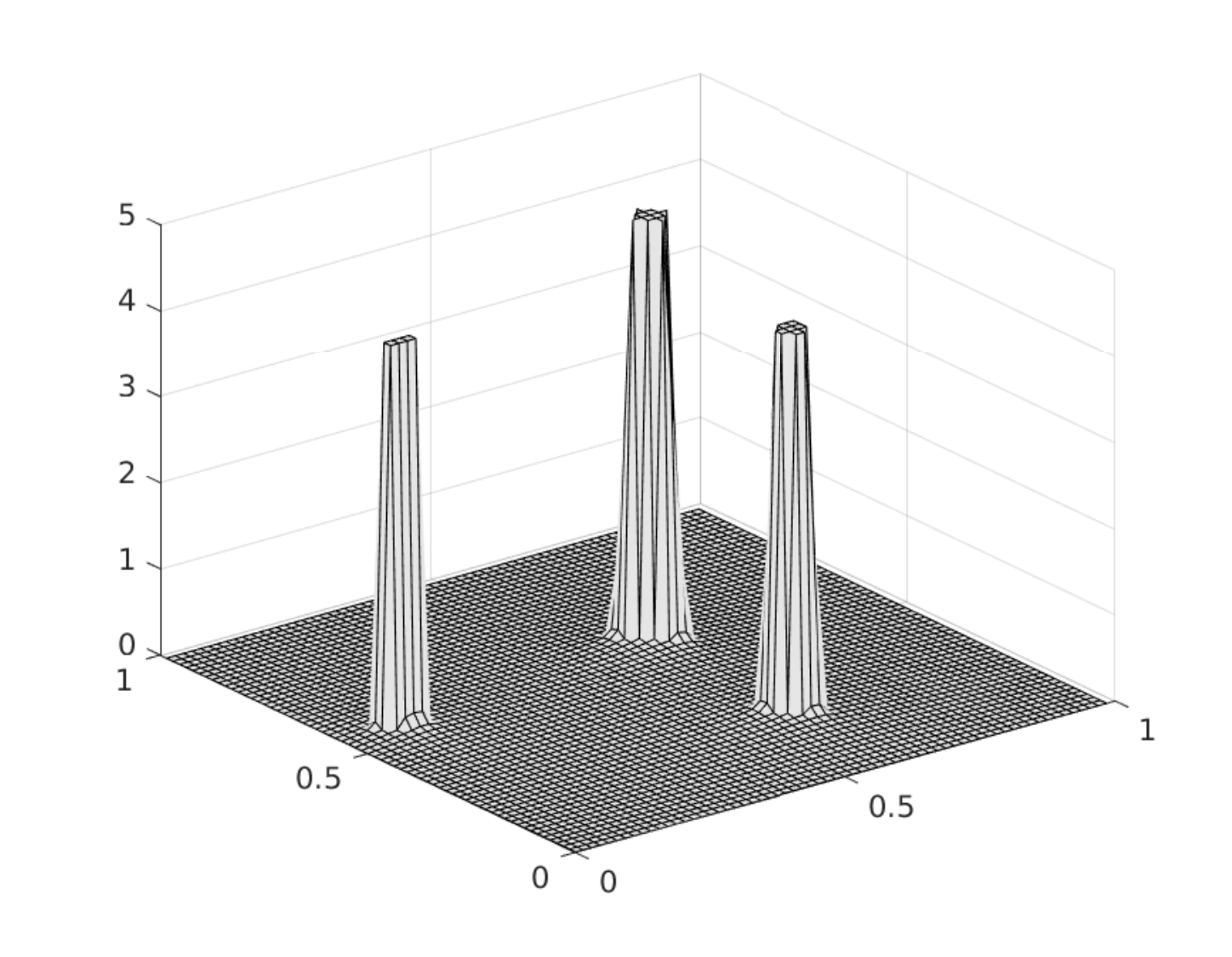}\label{t}}
\subfloat[Solution of linearized problem]{\includegraphics[clip=true, trim = 30 20 20 20, width=0.5\textwidth]{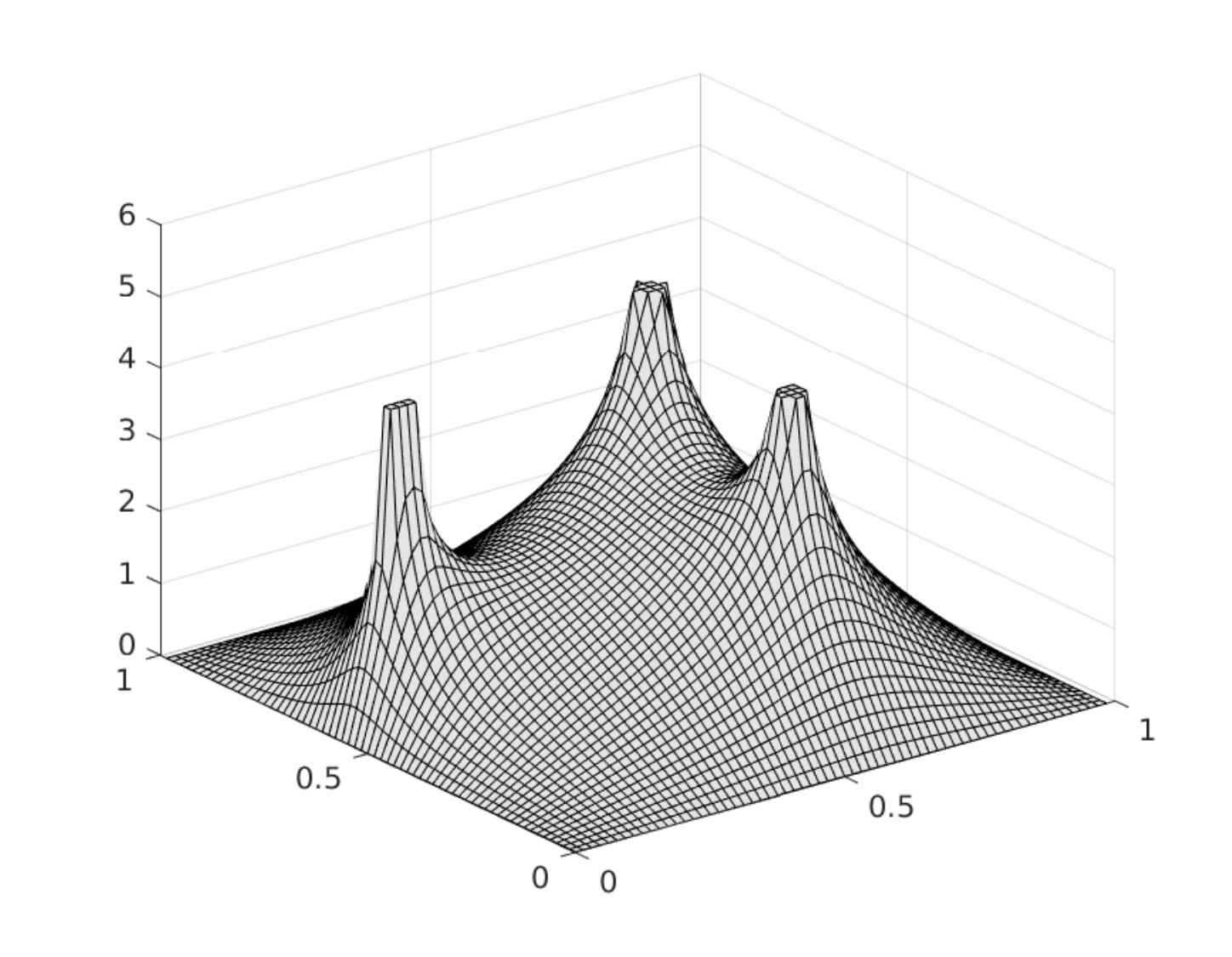}\label{l}}
\caption{Comparison of (a) the true minimal surface for $\phi_1$, and (b) the solution of the linearized problem, which is presented incorrectly in \cite[Figure 3d]{zosso2017efficient} as the true solution of the nonlinear minimal surface problem on a $64\times 64$ grid.}
\label{fig:MS}
\end{figure}

It is easy to see that Figure \ref{l} cannot be the true minimal surface for $\phi_1$ by computing the surface area of the solutions: Our solution in Figure \ref{t} has surface area $3.9855$, while the surface area of Figure \ref{l} is $8.5105$. We can also compute by hand the surface area of the obstacle $u\equiv \phi_1$. The obstacles are a square, circle, and a line segment. The square has side lengths $\ell = \sqrt{0.04^2 + 0.04^2} = 0.0566$, so perimeter is $0.2263$ and area $A_1:=0.0032$. The height of the square is $5$, so the surface area contributed by the obstacle would be $S_1:=1.1315$. The circle has radius $r=\sqrt{0.001} \approx 0.0316$, and so perimeter is $2\pi r \approx 0.1987$ and area is $A_2:=0.0031$. The height of the circle is $4.5$, so it contributes surface area $S_2:=0.8941$. Finally, the line segment has width $0.055$ and height $4.5$, so the surface area contribution is $S_3:=0.2475$, while the line segment contributes zero area in the projection to the plane. The surface area of the solution is then
\[\text{Surface Area } = 1 - A_1 - A_2 + S_1 + S_2 + S_3 = 3.2668,\]
which agrees more closely with our solution, up to discretization errors on the coarse $64\times 64$ grid.

\begin{table}[!t]
\centering
\begin{tabular}{|c|c|c|c|c|c|c|}
 \hline
 &\multicolumn{6}{c|}{Obstacle $\phi:=\phi_1/50$}\\
 \hline
 &\multicolumn{2}{c|}{\textbf{Our Method}}&\multicolumn{2}{c|}{\textbf{Primal Dual} \cite{zosso2017efficient}}&\multicolumn{2}{c|}{\textbf{$L^1$ penalty} \cite{tran20151}}\\
\hline
Mesh & Time & Iter.& Time & Iter. &Time & Inner (outer) iter.  \\
\hline
$64\times 64$     & 0.023 	& 360   & 0.186 	& 370    & 0.4 & 7254 (2380)    \\ 
$128\times 128$   & 0.144 	& 823   & 1.76 	& 870    & 5.6 & 29284 (10340) \\ 
$256\times 256$   & 1.22 	& 1863  & 18 	   & 2070   &   &        \\ 
$512\times 512$   & 12 	   & 4135  & 163 	   & 4390   &      &        \\ 
$1024\times 1024$ & 107 	& 9074  & 1650 	& 10210  &      &        \\
\hline
Complexity  & 1.54& 0.58   & 1.64     & 0.60 &  1.9  & 0.93       \\
\hline
\end{tabular}
\caption{ Run times in seconds and number of iterations for the PDE accelerated solver, primal dual method \cite{zosso2017efficient}, and $L^1$ penalty method \cite{tran20151} for solving the \emph{nonlinear} minimal surface obstacle problem with obstacle $\phi_1$. The complexity is measured as a function of the number of grid points used in the discretization. }
\label{tab:MS1sima}
\end{table}

\begin{table}[!t]
\centering
\begin{tabular}{|c|c|c|c|c|c|c|}
 \hline
 &\multicolumn{6}{c|}{Obstacle $\phi:=\phi_1/50$}\\
 \hline
 &\multicolumn{2}{c|}{\textbf{Our Method}}&\multicolumn{2}{c|}{\textbf{Primal Dual} \cite{zosso2017efficient}}&\multicolumn{2}{c|}{\textbf{$L^1$ penalty} \cite{tran20151}}\\
\hline
Mesh & Time & Iter.& Time & Iter. &Time & Inner (outer) iter.  \\
\hline
$64\times 64$    & 0.014 & 288    & 0.23     & 294   & 0.23 & 4969 (206)  \\ 
$128\times 128$   & 0.1   & 618    & 2.4      & 684   & 2.94 & 18228 (385) \\ 
$256\times 256$   & 0.9 	& 1323   & 23.6     & 1556  & 50.9 & 70177 (1221) \\ 
\hline
Complexity  & 1.5 & 0.55   & 1.67     & 0.6 & 1.95  & 0.96   (0.64)   \\
\hline
\end{tabular}
\caption{ Run times in the same setting as Table \ref{tab:MS1sima}, except with the relaxed stopping condition $\|u^{n+1}-u^n\|_{L^\infty}\leq dx^2/100$. In all other experiments we use the stopping condition \eqref{eq:residual}.  }
\label{tab:MS1sima_stop}
\end{table}

We now compare runtimes and iteration counts for our PDE acceleration method against the primal dual method \cite{zosso2017efficient} with our improved bisection method for solving the dual problem presented in Section \ref{sec:pdual}, and against the $L^1$-penalty method from \cite{tran20151}.  Table \ref{tab:MS1sima} compares the run times for the obstacle $\varphi:=\varphi_1/50$. Our method is more than 10x faster than primal dual in terms of CPU time, while both algorithms have similar iteration counts. The difference is that the PDE acceleration updates are explicit, while the dual update for the primal dual algorithm is implicit and involves solving a nonlinear optimization problem. We note both algorithms converge to a surface that looks to the eye similar to the minimal surface in about half of the iterations reported, and the final iterations are used to resolve the accuracy to the desired tolerance. For the $L^1$-penalty method, we used parameters $\lambda=100$, $\mu=500$, $L=2/dx^2$, and $dt = 1/4L$, which gave the best performance over the parameters we tried, and we report both the inner and outer iteration counts for completeness. Each inner iteration has similar complexity to a PDE acceleration iteration. The $L^1$-penalty method did not converge to our strict stopping condition \eqref{eq:residual} for grids of size $256\times 256$ or larger (we stopped the experiment at 18 minutes and 1 million iterations on the $256\times 256$ grid).

Our stopping condition \eqref{eq:residual} is standard in rigorous scientific computing, and is simply asking that all methods solve the same problem to the same accuracy.  It is also common to use the difference between subsequent iterates as a stopping condition, however, this can have a different meaning for each algorithm and does not provide any guarantee that the algorithms are solving the correct problem (e.g., one can reduce the time step to encourage ``faster'' convergence). To be complete, we have included in Table \ref{tab:MS1sima_stop} a comparison of PDE acceleration, primal dual, and the $L^1$-penalty method for the stopping condition
\begin{equation}\label{eq:stopping2}
\|u^{n+1}-u^n\|_{L^\infty}\leq C dx^2,
\end{equation}
for a constant $C$, which is the same condition used in \cite{tran20151}. We took $C=1/100$, since for any larger value of $C$, the $L^1$-penalty method stopped at a surface that was clearly by eye very far from the true minimal surface. We only computed the table up to a $256\times 256$ grid, since the $L^1$-penalty method did not converge in a reasonable amount of time for larger grids. We see that PDE acceleration and primal dual have similar complexity, though PDE acceleration is roughly $10$-times faster, and the $L^1$-penalty method is an order of magnitude slower.

Table \ref{tab:MS1sima} (and all future tables) also show computational complexity, which is computed as the exponent $p>0$ for which the curve $N^p$ most closely fits the CPU time or iteration count, where $N$ is the number of grid points used in the discretization. We see the complexity of PDE acceleration is $p\approx 1.55$, which agrees with the discussion in Section \ref{sec:complexity} for 2D problems. The primal dual method has slightly worse complexity ($p\approx 1.65$), and the $L^1$ penalty method has complexity $p\approx 2$. We note that complexity for the primal dual and $L^1$-penalty methods are different than those reported in \cite{zosso2017efficient} and \cite{tran20151}, respectively. The reason for this is that \cite{zosso2017efficient} and \cite{tran20151} report complexity for the \emph{linearized} minimal surface problem, and in particular, do not report runtimes or complexity for nonlinear problems. Furthermore, \cite{tran20151} reports \emph{sublinear} complexity $N^{0.85}$, which is either a numerical artifact or the result of stopping the iterations too early by choosing $C$ too large in \eqref{eq:stopping2}. Indeed, as soon as one visits each grid point \emph{once}, the complexity must be at least linear (e.g., $N^1$).

Let us explain briefly why the nonlinear minimal surface problem is more computationally complex to solve via $L^1$-penalty and primal dual methods. The $L^1$-penalty method \cite{tran20151} for linear problems involves solving a linear Poisson equation at each outer iteration, which can be done in linear time with multigrid methods, for example, while for nonlinear problems the outer iteration involves solving a nonlinear minimal surface problem, which is more expensive. Interestingly, the authors of \cite{tran20151} use Nesterov acceleration to solve the nonlinear problem at each outer iteration. However, the use of Nesterov acceleration does not employ our optimal damping parameter from Section \ref{sec:DP}, and the momentum is reset at each outer iteration, which we find inhibits the acceleration obtained from momentum methods. Regarding the primal dual method \cite{zosso2017efficient}, when solving the linearized problem one can use a handful of IRLS steps to solve the dual problem, as described in \cite{zosso2017efficient}. However, for nonlinear problems the IRLS method fails to converge and our new bisection method (see Section \ref{sec:pdual}) is required, which requires approximately $30-50$ iterations. This makes the dual problem more expensive to solve for nonlinear minimal surface problems and explains the difference between computation time for linear and nonlinear problems. To be clear, the IRLS method is unusable for nonlinear problems, since it returns complex numbers after a few iterations, so it does not even provide an approximate solution to the problem.

\begin{table}[!t]
\centering
\begin{tabular}{|c|c|c|c|c|c|c|}
 \hline
 &\multicolumn{6}{c|}{Obstacle $\phi:=\phi_2$}\\
 \hline
 &\multicolumn{2}{c|}{\textbf{Our Method}}&\multicolumn{2}{c|}{\textbf{Primal Dual} \cite{zosso2017efficient}}&\multicolumn{2}{c|}{\textbf{$L^1$ penalty} \cite{tran20151}}\\
\hline
Mesh & Time & Iter.& Time & Iter. &Time & Inner (outer) iter.  \\
\hline
$64\times 64$     & 0.012 	& 300   & 0.182 	& 330 & 0.31   & 7065 (60)   \\ 
$128\times 128$   & 0.138 	& 704   & 1.82 	& 780 & 3.4    & 19712 (70)   \\ 
$256\times 256$   & 1.08 	& 1620  & 17.8 	& 1720& 39.8   & 58788 (170) \\ 
$512\times 512$   & 10.2 	& 3642  & 180 	   & 4320& 551.1  & 199323 (470) \\ 
$1024\times 1024$ & 95.1 	& 8117  & 1880 	& 9710& 8401  &  660908 (1030) \\
\hline
Complexity  & 1.61& 0.59   & 1.66     & 0.61 &  1.84  & 0.82 (0.55)     \\
\hline
\end{tabular}
\caption{ Run times in seconds and number of iterations for the PDE accelerated solver, primal dual method \cite{zosso2017efficient}, and $L^1$ penalty method \cite{tran20151} for solving the \emph{nonlinear} minimal surface obstacle problem with obstacle $\phi_2$. }
\label{tab:MS1simb}
\end{table}

The second obstacle we consider is
\begin{equation}\label{eq:o2}
\varphi_2(x) = \sqrt{(1-|x-P|^2/0.09)_+}+\sqrt{(1-|x-Q|^2/0.0025)_+},
\end{equation}
where $P=(0.55,0.5)$ and $Q=(0.1,0.5)$. Figure \ref{fig:MS2} shows the obstacle and minimal surface. The run times for the PDE acceleration, primal dual, and the $L^1$-penalty method are shown in Table \ref{tab:MS1simb}. We again see that PDE acceleration is approximately 10x faster in terms of CPU time, and the $L^1$-penalty method is an order of magnitude slower.

\begin{figure}
\centering
\subfloat[Obstacle $\varphi_2$]{\includegraphics[trim = 40 40 40 40, clip=true,width=0.50\textwidth]{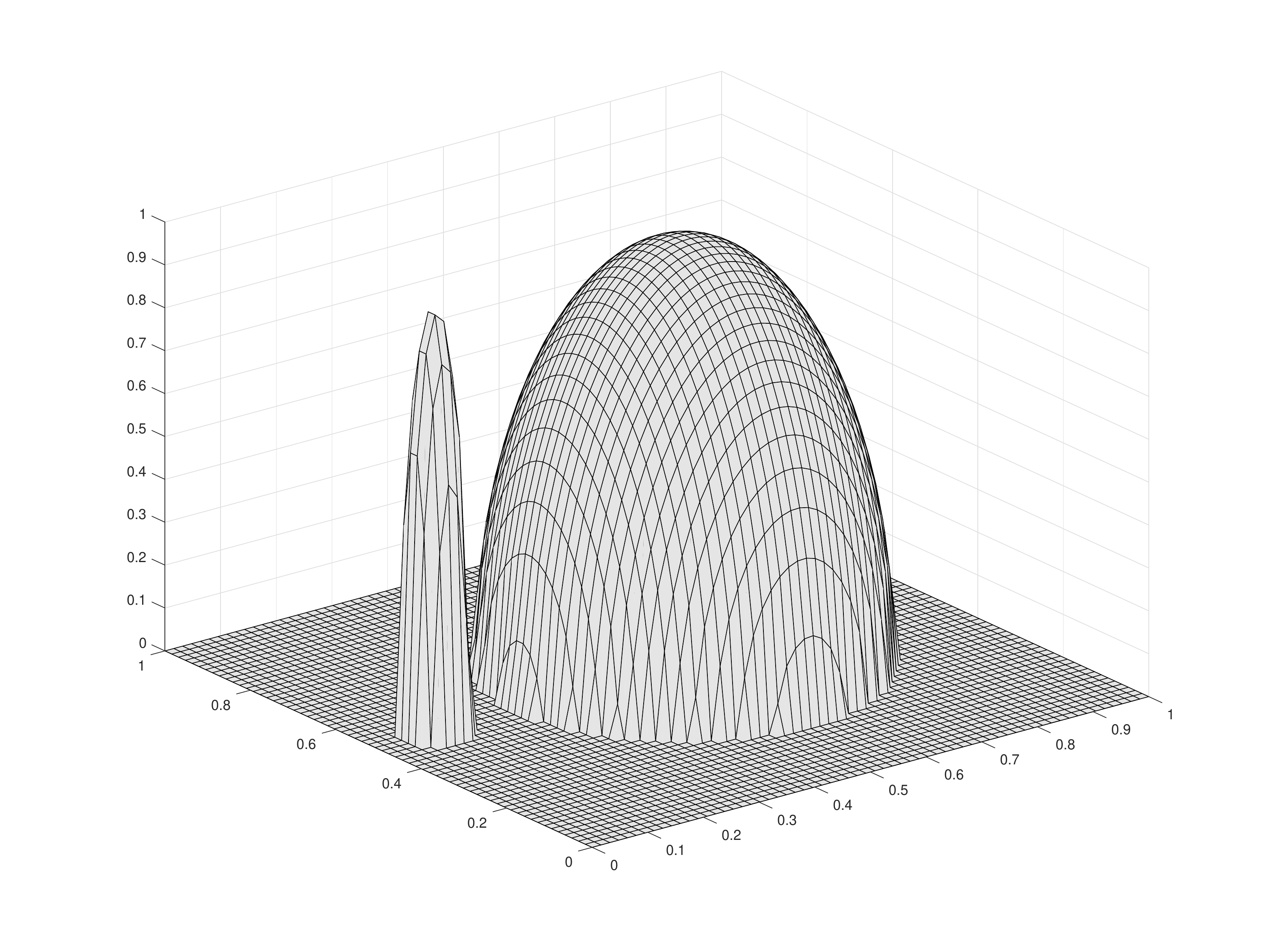}}
\subfloat[Minimal surface for $\varphi_2$]{\includegraphics[trim = 40 40 40 40, clip=true,width=0.50\textwidth]{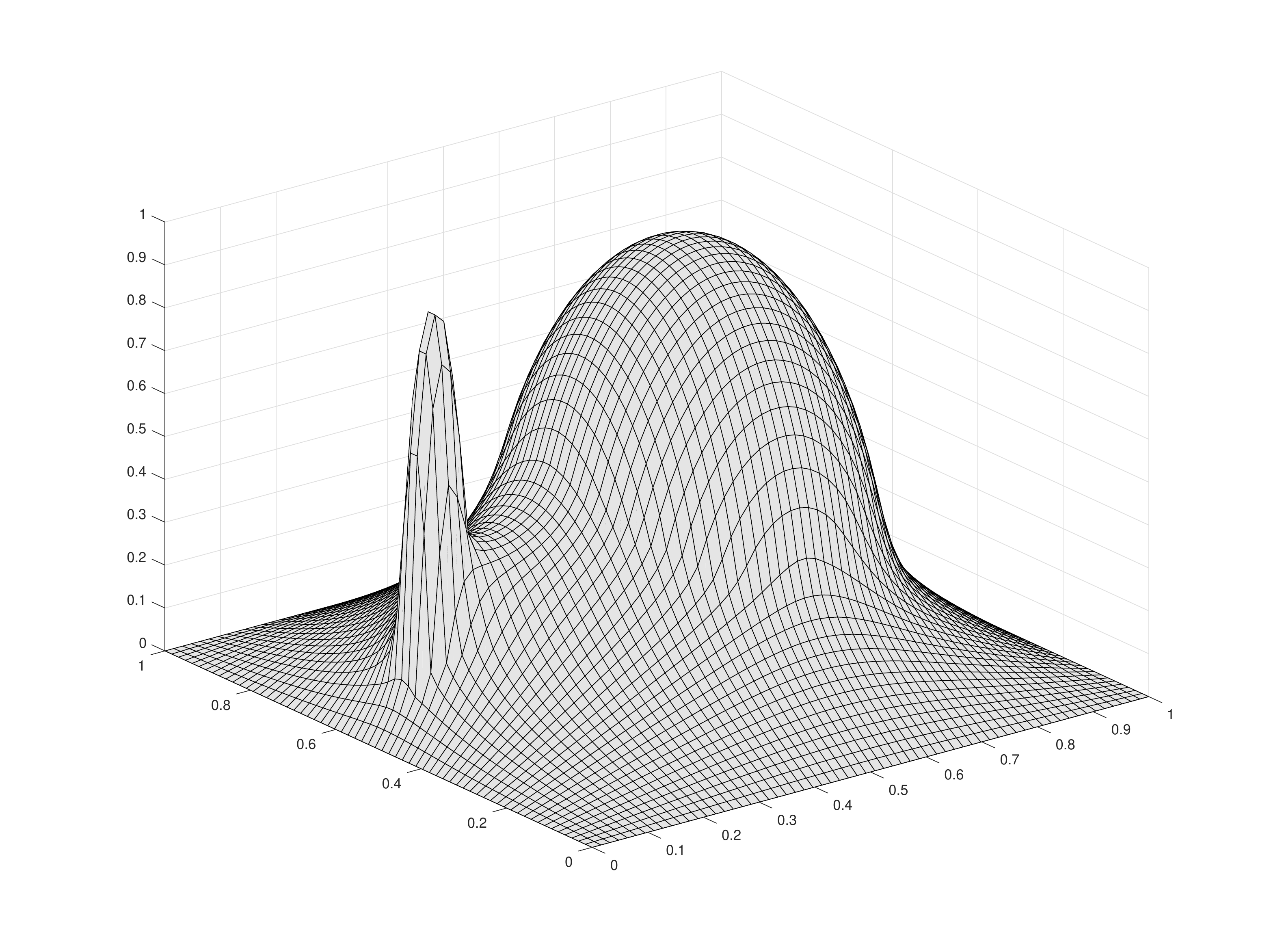}\label{fig:MS21}}

\caption{Minimal surface for the obstacle $\varphi_2$ computed with PDE acceleration on a $64\times 64$ grid.}
\label{fig:MS2}
\end{figure}

\subsection{Double obstacle with forcing}

Here, we consider the double obstacle problem with forcing from \cite{zosso2017efficient} (originally from \cite{wang2008algorithm};  see also \cite{badea2003convergence} for double obstacle problems ). The nonlinear version of the problem is given by
\begin{equation}\label{eq:cmsd}
\min \left\{ \int_{\Omega} \sqrt{1 + |\nabla u|^2} -uv\,dx \ : \ u\in H^1_0(\Omega) \text{ and }\psi \geq u\geq \phi\right\}.
\end{equation} 
The scheme \eqref{eq:scheme2} is simple to modify for the double obstacle problem by setting $u^{n+1}=\max\{\min\{v,\psi\},\phi\}$ at each iteration. We test PDE acceleration on the elasto-plastic torsion problem, originally from \cite{wang2008algorithm}. In this setting, $\Omega=[0,1]^2$, $\phi_3(x) = -\text{dist}(x,\partial \Omega)$, $\psi_3(x) = 0.2$, $u=0$ on $\partial\Omega$, and the force $v$ is given by
\begin{equation}\label{eq:force}
v(x) = 
\begin{cases}
300,&\text{if }x \in S:=\{|x_1-x_2|\leq 0.1 \text{ and } x_1\leq 0.3\}\\
-70e^{x_2}g(x),&\text{if }s\not\in S \text{ and } x_1 \leq 1-x_2\\
15e^{x_2}g(x),&\text{if }s\not\in S \text{ and } x_1 > 1-x_2,
\end{cases}
\end{equation}
where
\begin{equation}\label{eq:gf}
g(x) = 
\begin{cases}
6x_1,&\text{if }0 \leq  x_1 \leq  1/6\\
2(1-3x_1),&\text{if }1/6 <  x_1 \leq  1/3\\
6(x_1-1/3),&\text{if }1/3 <  x_1 \leq  1/2\\
2(1-3(x_1-1/3)),&\text{if }1/2 <  x_1 \leq  2/3\\
6(x_1-2/3),&\text{if }2/3 <  x_1 \leq  5/6\\
2(1-3(x_1-2/3)),&\text{if }5/6 <  x_1 \leq  1.
\end{cases}
\end{equation}
We then set $\phi:=\phi_3/10$, $\psi =\psi_3/10$ and $v:=v_3/10$ to get similar results to \cite{zosso2017efficient} where the linearization is studied. For comparison with \cite{zosso2017efficient} we also consider the linear double obstacle problem
\begin{equation}\label{eq:cmsdLin}
\min \left\{ \int_{\Omega} \frac{1}{2}|\nabla u|^2 -uv\,dx \ : \ u\in H^1_0(\Omega) \text{ and }\psi \geq u\geq \phi\right\}.
\end{equation} 

We report the CPU runtimes and iteration counts for the PDE acceleration method and the improved primal dual method for both the linear and nonlinear double obstacle problems in Table \ref{tab:DOsim}. We see that for the nonlinear problem, PDE acceleration is again roughly 10x faster than primal dual, while only 2x faster for the linear obstacle problem. The difference is that the dual update is explicit for linear problems, which leads to a substantial acceleration. Figure \ref{fig:MSDO} shows the computed membrane on a $64\times 64$ grid, and the double obstacle contact regions computed on a $512\times 512$ grid. These agree well with the results in \cite[Fig.~5]{zosso2017efficient}.

\begin{table}[!t]
\centering
\begin{tabular}{|c|c|c|c|c|c|c|c|c|}
\hline &\multicolumn{4}{c|}{Linear double obstacle problem}&\multicolumn{4}{c|}{Nonlinear double obstacle problem}\\
 \hline &\multicolumn{2}{c|}{\textbf{Our Method} }&\multicolumn{2}{c|}{\textbf{Primal Dual} \cite{zosso2017efficient}}&\multicolumn{2}{c|}{\textbf{Our Method} }&\multicolumn{2}{c|}{\textbf{Primal Dual} \cite{zosso2017efficient}}\\
\hline
Mesh & Time & Iter. & Time & Iter. &Time & Iter. &Time & Iter. \\
\hline
$64^2$    & 0.012	& 378  & 0.01 	& 356   & 0.016 	& 382  & 0.156 	& 360l      \\ 
$128^2$   & 0.1	& 835  & 0.086	& 814   & 0.133 	& 862  & 1.59 	   & 810        \\ 
$256^2$   & 0.69	& 1807 & 0.785	& 1884  & 1.23 	& 1937 & 15.2 	   & 1810          \\ 
$512^2$   & 5 	   & 3937 & 7.56 	& 4092  & 11.9 	& 4297 & 143 	   & 4050           \\ 
$1024^2$  & 62.9 	& 8459 & 81.7 	& 9113  & 108 	   & 9409 & 1.540 	& 9000            \\ 
\hline
Comp.& 1.52 & 0.56  & 1.62  & 0.58  &  1.6    & 0.58  & 1.65     &0.58     \\
\hline
\end{tabular}
\caption{ Run times in seconds for the PDE accelerated solver compared to the primal dual method from \cite{zosso2017efficient} for the \emph{linear} and \emph{nonlinear} minimal surface double obstacle problem with forcing (the elasto-plastic torsion problem). }
\label{tab:DOsim}
\end{table}

\begin{figure}
\centering
\subfloat[Membrane ($64\times 64$ grid)]{\includegraphics[trim = 40 40 40 40, clip=true,width=0.50\textwidth]{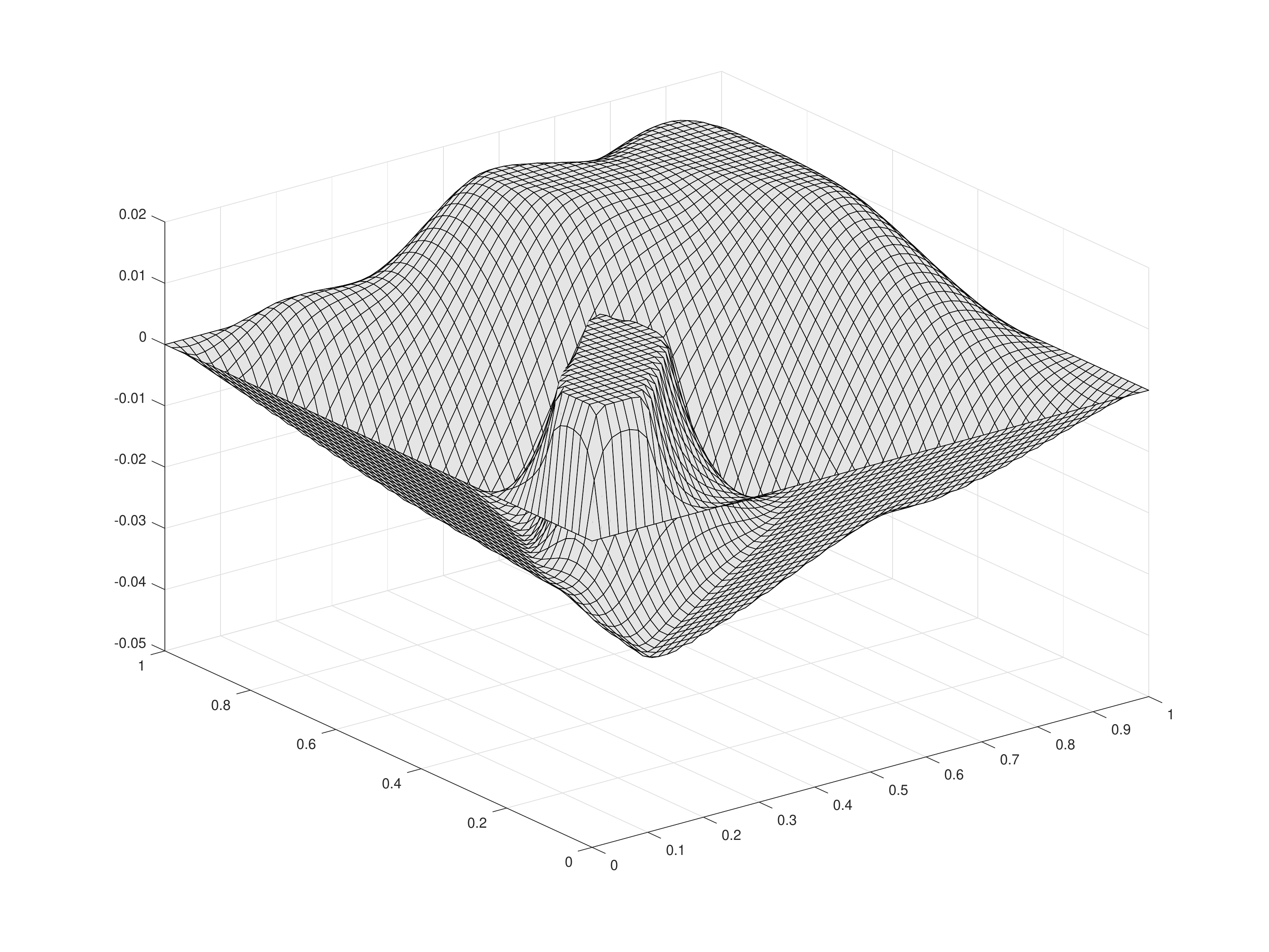}}
\subfloat[Double obstacle contact regions]{\includegraphics[width=0.50\textwidth]{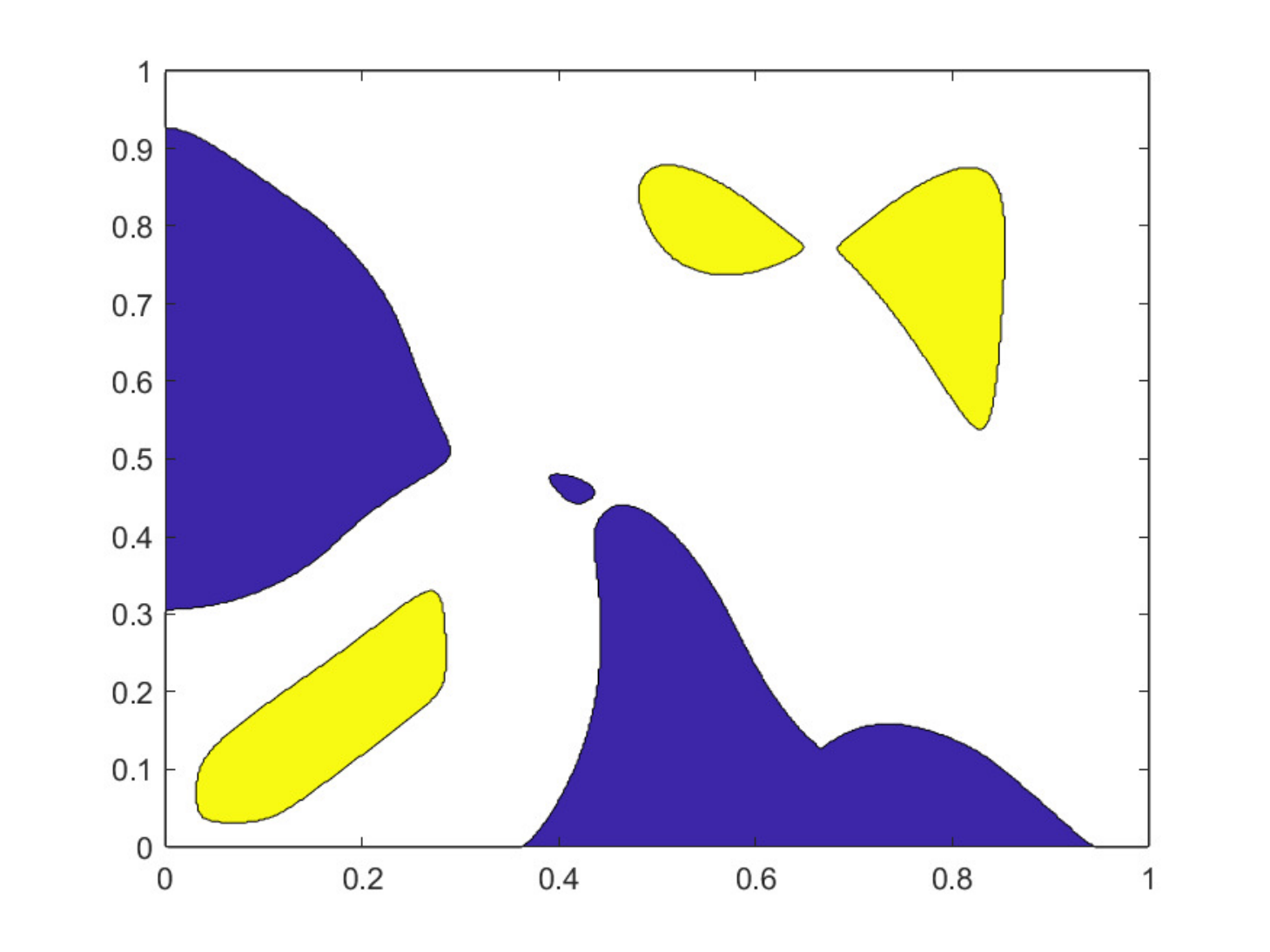}}
\caption{Depiction of the computed membrane and contact regions for the double obstacle with forcing example from \cite{zosso2017efficient}. The contact regions were computed on a $512\times 512$ grid.}
\label{fig:MSDO}
\end{figure}

\subsection{Stochastic homogenization with obstacles}

We consider the stochastic obstacle problem
\begin{equation}\label{eq:stob}
\min_{u\in H^1_0(\Omega)}\left\{ \int_\Omega \frac{1}{2}|A(\tfrac{x}{\eps})\nabla u|^2 - fu\,dx\, :\, u \geq \varphi \text{ in }\Omega  \right\},
\end{equation}
where $A(x)$ is sampled from a $\Z^d$-stationary probability measure with unit range dependence. We consider here a random checkerboard, where we let $(b(z))_{z\in \Z^2}$ be independent random variables such that 
\[\P(b(z)=1) = \P(b(z) = 9) = \frac{1}{2},\]
and set $A(x) = b(z)$ for $x\in z + [0,1)^2$.
We also set $f=1$. By the Dynkin formula~\cite[Ex.~2.3]{armstrong2017quantitative}, solutions of \eqref{eq:stob} converge almost surely to solutions of the homogenized problem
\begin{equation}\label{eq:stobhom}
\min_{u\in H^1_0(\Omega)}\left\{ \int_\Omega \frac{1}{2}|3\nabla u|^2 - u\,dx\, :\, u \geq \varphi \text{ in }\Omega  \right\},
\end{equation}
as $\eps\to 0^+$. Experiments with this example (without the obstacle) are also presented in \cite{armstrong2018iterative}.

We ran some experiments using the PDE acceleration method for solving this stochastic obstacle problem. Table \ref{tab:HG} shows the runtimes for different values of the damping parameter $a$. Figure \ref{fig:HG} shows a random checkerboard, the solution of the stochastic obstacle problem \eqref{eq:stob} and the solution of the homogenized problem \eqref{eq:stobhom}.  We mention that knowledge of the effective (homogenized) equation \eqref{eq:stobhom} can help with selecting the optimal damping parameter for the stochastic problem \eqref{eq:stob}. Indeed, by \eqref{eq:aopt} the optimal damping for the homogenized equation (without the obstacle) is $a=2\sqrt{3}\pi$, which is larger than the damping $a=2\pi$ we have been using in this paper so far. However, since the discussion in Section \ref{sec:optdamp} does not consider the obstacle, the true optimal damping parameter will depend on the smallest eigenvalue of the effective operator on the domain $\{u > \varphi\}$, which is initially unknown. Since this domain is strictly smaller than $\Omega$, monotonicity of eigenvalues implies that the optimal damping is larger than our computed $a=2\sqrt{3}\pi$. We find (see Table \ref{tab:HG}) that $a=6\pi$ is close to optimal for this problem. 

\begin{figure}
\centering
\subfloat[Checkerboard]{\includegraphics[trim = 65 30 70 20, clip=true,width=0.50\textwidth]{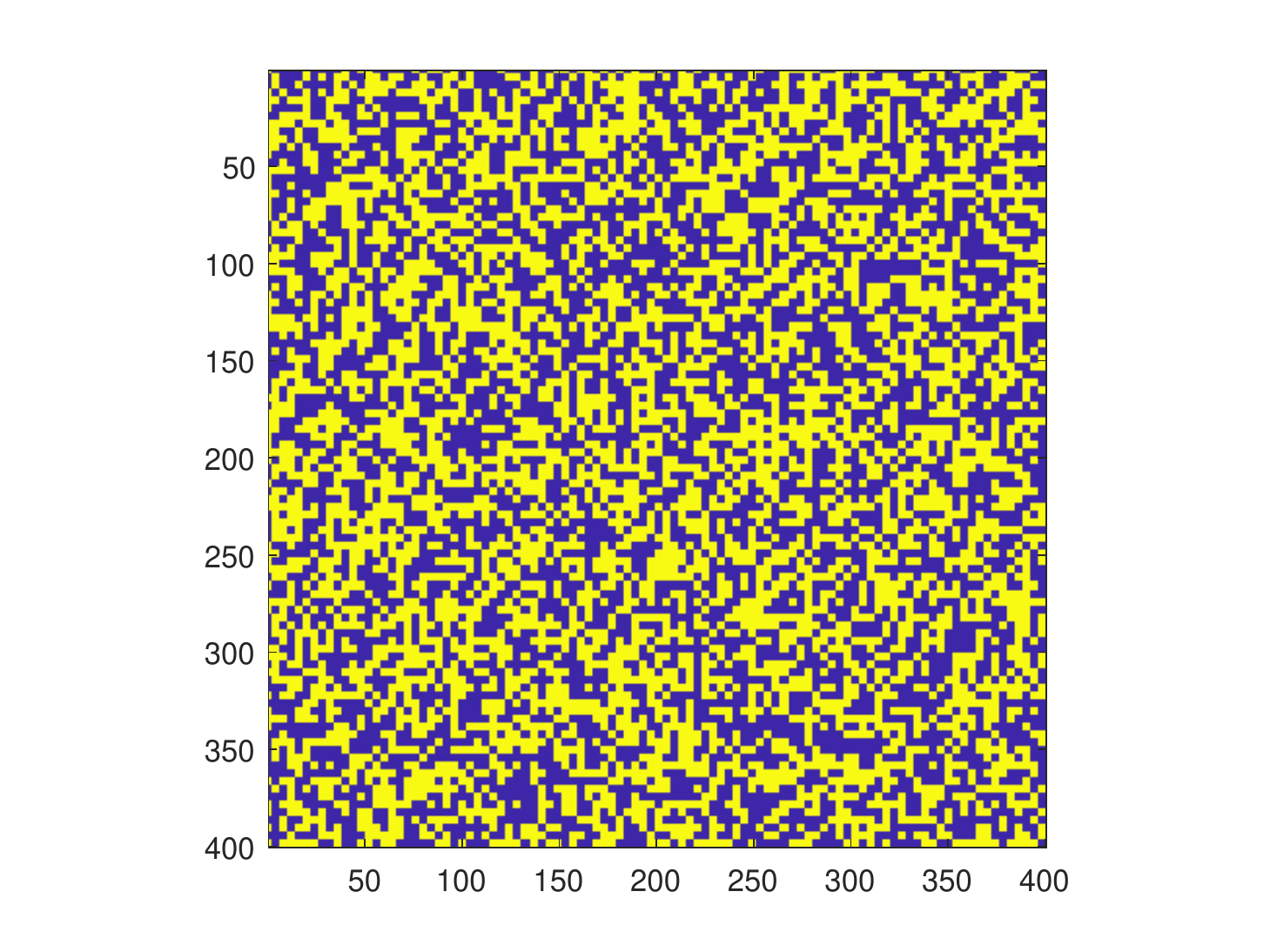}}
\subfloat[Solution of \eqref{eq:stob}]{\includegraphics[trim = 100 30 90 90, clip=true,width=0.50\textwidth]{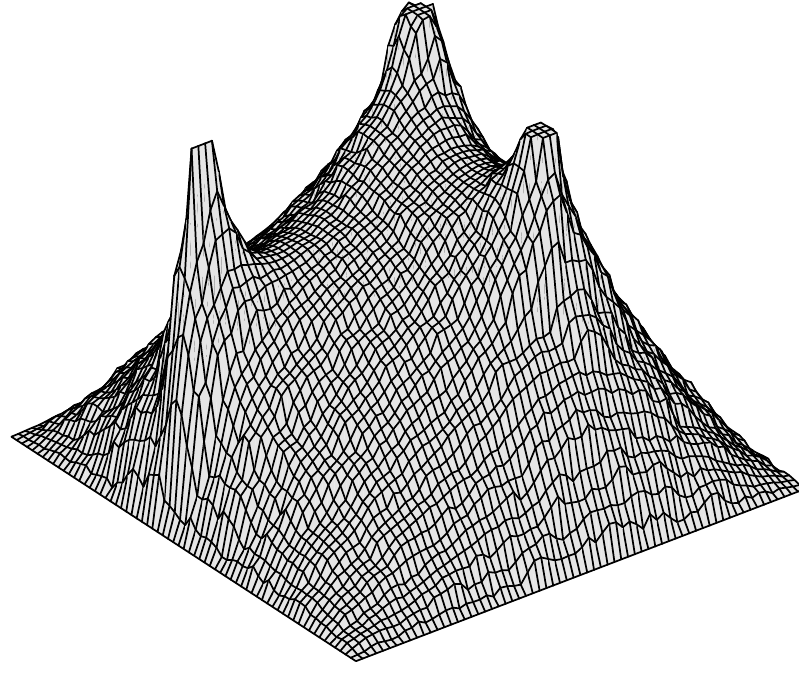}}

\subfloat[Contours of solution of \eqref{eq:stob}]{\includegraphics[trim = 65 30 70 20, clip=true,width=0.50\textwidth]{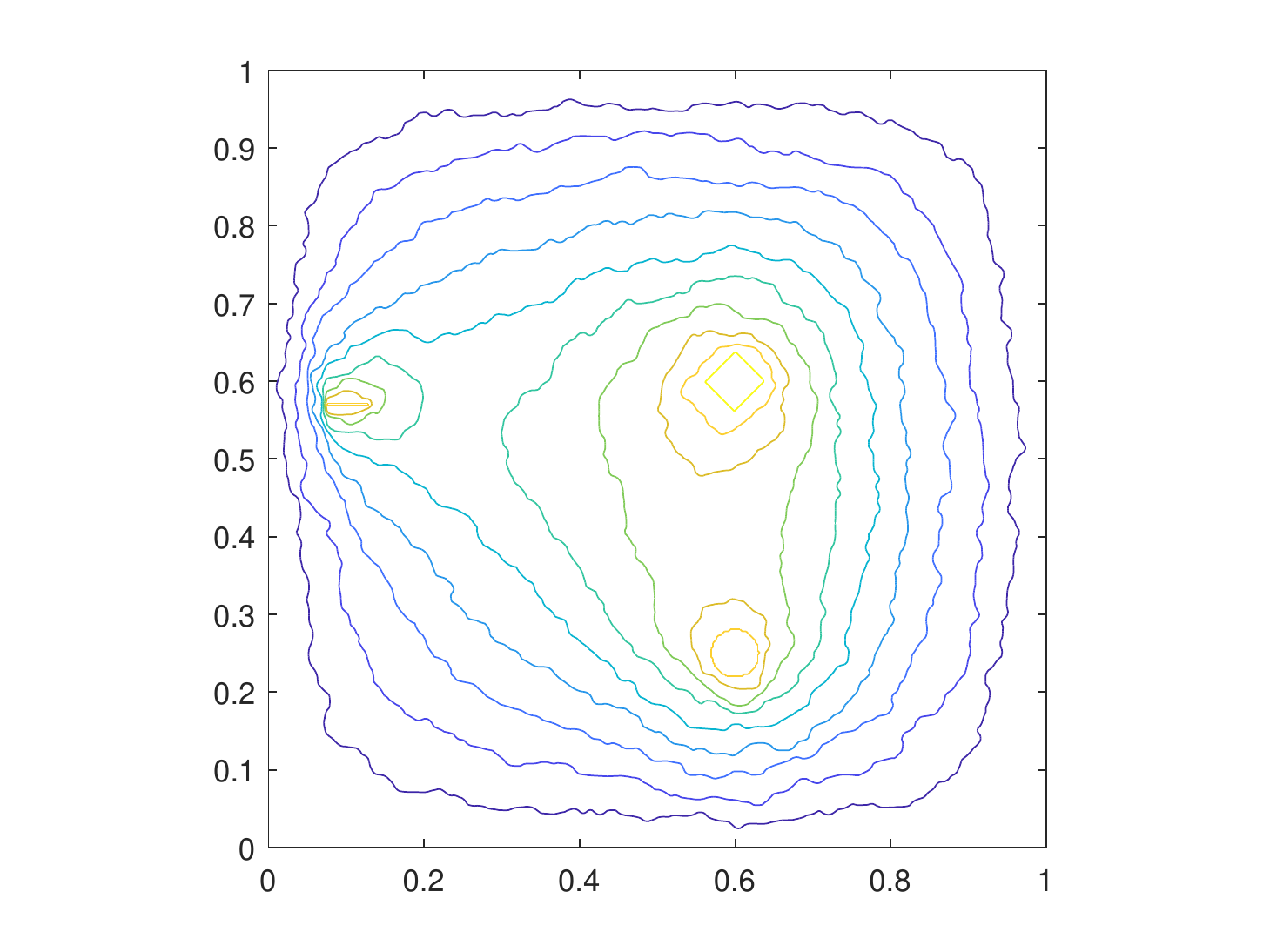}}
\subfloat[Contours of solution of \eqref{eq:stobhom}]{\includegraphics[trim = 65 30 70 20, clip=true,width=0.50\textwidth]{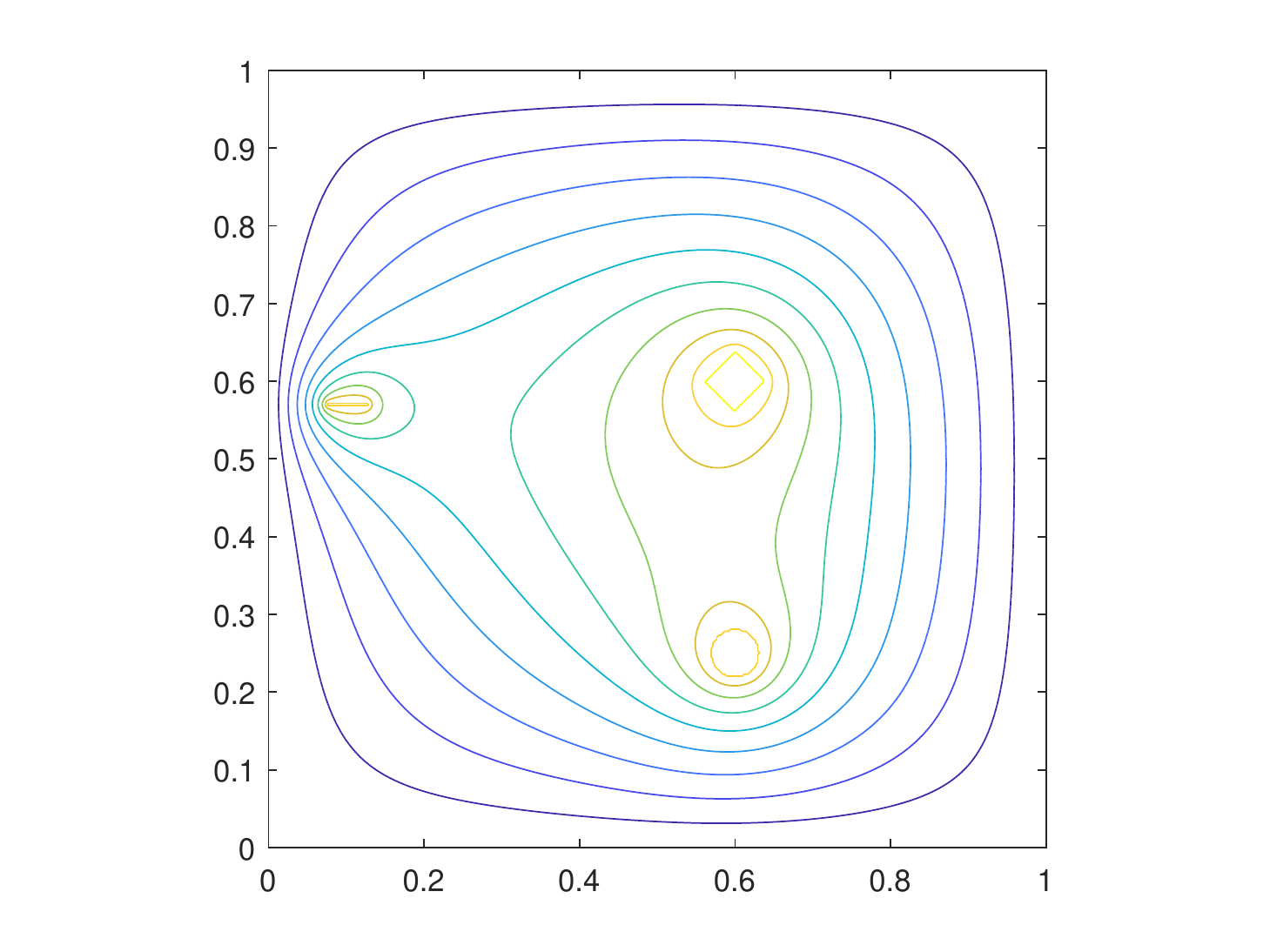}}
\caption{Solution and contours of the stochastic obstacle problem \eqref{eq:stob} and the effective equation \eqref{eq:stobhom}.}
\label{fig:HG}
\end{figure}

\begin{table}[!t]
\centering
\begin{tabular}{|c|c|c|c|c|c|c|c|}
 \hline \multicolumn{2}{|r|}{\bf Damping:} &\multicolumn{2}{c|}{$a=2\pi$}&\multicolumn{2}{c|}{$a=6\pi$}&\multicolumn{2}{c|}{$a=9\pi$}\\
\hline
Checkerboard & Mesh & Time (s) & Iter. & Time (s) & Iter. &Time (s) & Iter. \\
\hline
$16\times 16$ & $64^2$     & 0.037 	& 1665  & 0.012 	& 572   & 0.012 	& 569   \\ 
$32\times 32$ & $128^2$    & 0.315 	& 3924  & 0.115 	& 1340  & 0.12 	& 1469 \\ 
$64\times 64$ & $256^2$    & 3.35 	& 8919  & 1.16 	& 3087  & 1.34 	& 3588     \\ 
$128\times 128$& $512^2$   & 31.2 	& 20224 & 9.91 	& 6908  & 10.7 	& 7482     \\ 
$256\times 256$& $1024^2$  & 339 	& 45003 & 109 	   & 15197 & 118 	   & 16425  \\ 
\hline
\multicolumn{2}{|c|}{Complexity}  & 1.65 & 0.59 & 1.64 & 0.59 & 1.65 & 0.6 \\
\hline
\end{tabular}
\caption{Run times for the PDE accelerated solver on the stochastic homogenization obstacle problem with damping parameters $a=2\pi,6\pi,9\pi$.}
\label{tab:HG}
\end{table}

\section{Conclusion}

 We studied the recently introduced variational framework, called PDE acceleration, for applying accelerated gradient descent (or momentum descent) to problems in the calculus of variations. For a large class of convex optimization problems, the descent equations for PDE acceleration correspond to a nonlinear damped wave equation, which can be solved by a simple explicit forward Euler scheme. The acceleration is realized as a relaxation of the CFL condition for a wave equation ($dt\sim dx$) compared to a diffusion equation ($dt\sim dx^2$). We proved convergence with a linear rate for this class of accelerated PDE's, and applied the method to minimal surface obstacle problems, including a double obstacle problem with forcing, and a stochastic homogenization problem with obstacle constraint. In every case, PDE acceleration is faster than existing state of the art methods.

We mention briefly some ideas for future work. First, we use the damping parameter $a=2\pi$ throughout the whole paper, which is surely not optimal for every problem. We can achieve faster convergence for many experiments in the paper by hand tuning the damping. The difficulty with selecting the optimal damping is that it depends on the first Dirichlet eigenvalue (in the linear case) on the free boundary domain $\{u>\varphi\}$, which is \emph{a priori} unknown. A way to improve performance could be to compute the solution first on a coarse grid, and then estimate the optimal damping from the computed free boundary and use the optimal damping parameter when solving the equation on a finer mesh. 

There are other natural ways to speed up convergence, such as considering a multi-grid approach, or varying the damping parameter over time. The damping parameter controls the damping profile in the frequency domain; larger choices of the damping parameter give preference to damping higher frequencies at the expense of leaving lower frequencies underdamped. This is reminiscent of how the choice of grid resolution affects the damping in multi-grid methods, and a smart choice of a schedule for varying the damping parameter may result in a significant speed up.

Finally, the methods here are not restricted to second order equations, and can be applied almost directly to higher order equations, such as the fourth order PDEs that have proven popular in image processing \cite{tai2011fast,zhu2012image,you2000fourth}. In this case, PDE acceleration will relax the very stiff CFL condition ($dt\sim dx^4$) for fourth order equations to $dt\sim dx^2$. It is also possible to make other choices for the kinetic energy, which would lead to other flows that may be of interest (however, due to Ostrogradsky instability \cite{woodard2015theorem}, the kinetic energy should only contain first derivatives of $u$ in time). Problems in the calculus of variations arise in virtually all fields of science and engineering, include problems like image segmentation and noise removal \cite{rudin1992nonlinear,chan2001active}, minimal surfaces \cite{caffarelli1998obstacle}, and materials science \cite{ball1998calculus}, among many others. The results of this paper suggest that PDE acceleration can be a useful tool for solving many of these other problems, and we intend to pursue such applications, and others, in future work.

%\bibliographystyle{abbrv}      % mathematics and physical sciences
%\bibliography{ref}   % name your BibTeX data base
\end{document}